\theoremstyle{change}
\renewcommand\textrm[1]{{\rm #1}}
\newlength{\listindent}
\newcommand\enumroman[1]{{\rm(\/\textit{\roman{#1}}\/)}}
\newcommand\enumarabic[1]{\arabic{#1}.}
\newcounter{enumcounter}
\newcommand\enum[1]{\setcounter{enumcounter}{#1}\enumroman{enumcounter}}
\newcommand\impl[2]{\enum{#1} $\then$ \enum{#2}}
\newenvironment{enumprop}
   {\begin{list}{\enumroman{enumiv}}{\usecounter{enumiv}
                      \setlength{\topsep}{0.1ex plus0.1ex minus0.1ex}
                      \setlength{\itemsep}{0.1ex plus0.1ex minus0.1ex}
                      \setlength{\parsep}{0.1ex plus0.1ex minus0.1ex}
                      \setlength{\itemindent}{0in}
                      \setlength{\labelwidth}{2.5em}
                      \setlength{\leftmargin}{2\listindent}
                      \setlength{\partopsep}{\medskipamount} } }
   {\end{list}}
\newenvironment{enumsrm}
   {\begin{list}{\enumroman{enumiv}}{\usecounter{enumiv}
                      \setlength{\topsep}{0.1ex plus0.1ex minus0.1ex}
                      \setlength{\itemsep}{0.1ex plus0.1ex minus0.1ex}
                      \setlength{\parsep}{0.1ex plus0.1ex minus0.1ex}
                      \setlength{\itemindent}{0in}
                      \setlength{\labelwidth}{2.5em}
                      \setlength{\leftmargin}{2\listindent}
                      \setlength{\partopsep}{\medskipamount} } }
   {\end{list}}
\newcommand\varhspace[1][1em]{\hspace{0em plus#1 minus0.3em}}
\def\nepok{\overprint{\raise1.5ex\rlap{$\;\;\urcorner$}}}
\def\nwpok{\overprint{\raise2.5ex\llap{$\lrcorner\;\;$}}}
\def\sepbk{\overprint{\lower1.5ex\rlap{$\;\;\lrcorner$}}}
\newcommand\diagramiso{\hbox{\lower1.5ex\hbox{$_\sim$}}}
\newtheorem{defn}{Definition}[section]
\newtheorem{lemma}[defn]{Lemma}
\newtheorem{prop}[defn]{Proposition}
\newtheorem{thm}[defn]{Theorem}
\newtheorem{cor}[defn]{Corollary}
\newenvironment{interlude}{\begin{trivlist}\setlength\topsep{\theorempreskipamount}\item}{\end{trivlist}}
\newenvironment{remarks*}{\begin{interlude}\pagebreak[2]\noindent\textbf{Remarks.\ }\nopagebreak}{\end{interlude}}
\newenvironment{examples*}{\begin{interlude}\pagebreak[2]\noindent\textbf{Examples.\ }\nopagebreak}{\end{interlude}}
\newenvironment{proof}{\begin{interlude}\noindent\textit{Proof.}}{\qed\end{interlude}}
\newenvironment{proof*}{\begin{interlude}\noindent\noindent\textit{Proof.}}{\end{interlude}}
\newcommand\qedsymbol{$\square$}
\newcommand\qed{\nopagebreak\penalty10000\hbox{}\nobreak\penalty10000\nopagebreak\penalty10000\hfill\hbox{\qedsymbol}\par}
\def\oldphi{\mathchar"11E}
\def\phi{\varphi}
\def\theta{\vartheta}
\def\epsilon{\varepsilon}
\def\sub{\subseteq}
\def\mathbb{\mathds}
\newcommand\bbN{{\mathbb{N}}}
\newcommand\bbZ{{\mathbb{Z}}}
\newcommand\bbQ{{\mathbb{Q}}}
\newcommand\bbP{{\mathbb{P}}}
\newcommand\bbG{{\mathbb{G}}}
\newcommand\bbQZ{\bbQ\mod\bbZ}
\DeclareMathOperator{\preim}{pre\kern0.13em im} %
\DeclareMathOperator{\preker}{pre\kern0.13em ker} %
\DeclareMathOperator{\precoker}{pre\kern0.13em coker} %
\DeclareMathOperator{\aut}{Aut} %
\renewcommand{\hom}{\operatorname{Hom}}
\DeclareMathOperator{\gal}{Gal} %
\DeclareMathOperator{\quot}{quot} %
\DeclareMathOperator{\ord}{ord} %
\DeclareMathOperator{\pic}{Pic} %
\newcommand\tr{\textrm{tr}}
\newcommand\gp{\textrm{\rm GP}}
\def\iso{\cong}
\newcommand\frakm{\mathfrak m}
\newcommand\defeq{:=}
\newcommand\fedeq{=:}
\newcommand\then{\Rightarrow}
\renewcommand\iff{\Leftrightarrow}
\newcommand\Iff{\;\Longleftrightarrow\;}
\newcommand\set[2][auto]{
     \ifthenelse{\equal{#1}{auto}}{\left\lbrace}{\csname #1\endcsname\lbrace} #2 \ifthenelse{\equal{#1}{auto}}{\right\rbrace}{\csname #1\endcsname\rbrace} }
\let\modulo\mod
\renewcommand\mod{/}
\let\tensor\otimes
\newcommand{\indlim}{\operatorname*{\underrightarrow{\lim}}} %
\renewcommand{\projlim}{\operatorname*{\underleftarrow{\lim}}}
\newcommand\longto{\longrightarrow}
\newcommand\isoto{\mbox{$\hspace{7.5pt}\raise 3pt\hbox{$\sim$}\hspace{-17pt}\longrightarrow\hspace{3pt}$}\linebreak[0]}
\newcommand\isoot{\mbox{$\hspace{8.5pt}\raise 3pt\hbox{$\sim$}\hspace{-18pt}\longleftarrow\hspace{3pt}$}\linebreak[0]}
\newcommand\into{\hookrightarrow}
\newcommand\Longto[1]{\stackrel {#1}\longto}
\newcommand\biext{\operatorname{Biext}}
\newcommand{\HH}{{\rm H}} %
\newcommand\scrE{\mathscr E}
\newcommand\scrG{\mathscr G}
\newcommand\scrH{\mathscr H}
\newcommand\scrP{\mathscr P}
\newcommand\scrR{\mathscr R}
\DeclareMathOperator\shom{{\scrH\kern-0.5ex\it o\kern-0.1ex m}} %
\DeclareMathOperator\sext{{\scrE\!\it xt}} %
\newcommand\inv{^{-\!1}}
\newcommand\units{^\ast}
\title{Reduction of Abelian Varieties\\ and Grothendieck's Pairing}
\author{Klaus Loerke}
\date{2009}
\begin{document}

\nonfrenchspacing

\renewcommand{\contentsname}{Inhalt}
\def\MakeUppercase#1{#1}

\maketitle

\begin{abstract}
We prove that abelian varieties of small dimension over discrete valuated, stricty henselian ground
fields with perfect residue class field obtain semistable reduction after a tamely ramified
extension of the ground field. Using this result we obtain perfectness results for Grothen\-dieck's
pairing.
\end{abstract}

\section{Introduction}

Let $R$ be a discrete valuation ring with field of fractions $K\defeq\quot R$  and with residue
class field $k\defeq R\mod\frakm$. Furthermore let $A_K$ be an abelian variety over $K$ with its
dual $A'_K$ and Néron models $A$ and $A'$, respectively. Their component groups are denoted by
$\oldphi$ and $\oldphi'$. The duality between $A_K$ and $A_K'$ is reflected by the \emph{Poincaré
bundle} $\scrP$ on $A_K\times_K A'_K$. It has the property that the induced maps
\[
A_K'\longto \pic^0 A_K,\quad a\mapsto\scrP|_{A_K\times\set a}
\]
and vice versa are isomorphisms. We wish to extend $\scrP$ to the level of the associated Néron
models, in order to study the relationship between $A$ and $A'$. An appropriate setting for this is the
notion of \emph{biextensions}. We briefly review the needed theory (cf.\ \cite{sga7}, VII).
\par
The canonical sequence
\[
0\longto\bbG_{m,R}\longto\scrG\longto i_\ast\bbZ\longto 0,
\]
where $\scrG$ denotes the Néron model of $\bbG_m$, gives rise to an exact sequence
\[
\biext^1(A_R,A'_R,\bbG_{m,R})\longto\biext^1(A_R,A'_R,\scrG)\longto\biext^1(A_R,A'_R,i_\ast\bbZ).
\]
After canonical identifications (\cite{Bo97}, section 4), this sequence can be written as
\[
\biext^1(A_R,A'_R,\bbG_{m,R})\longto\biext^1(A_K,A'_K,\bbG_{m,K})\longto\hom(\oldphi\tensor\oldphi',\bbQZ).
\]
We regard the bundle $\scrP$ as an element of the group $\biext^1(A_K,A'_K,\bbG_{m,K})$. Now,
Grothendieck's Pairing, $\gp$ for short, is defined to be the image of $\scrP$ in the group
$\hom(\oldphi\tensor\oldphi',\bbQZ)$. It represents the obstruction to extend $\scrP$ to a biextension
of $\bbG_{m,R}$ with $A$, $A'$. Grothendieck conjectured this pairing to be perfect. Indeed, for perfect residue
class field it can be shown that it is perfect\footnote{For mixed characteristic and perfect
residue class field cf.\ \cite{beg}, for finite $k$ cf.\ \cite{mcc86}. If $k$ is not perfect,
counterexamples can be found, \cite{bb}, corollary 2.5. } in all cases except of the case of a
discrete valuation ring of equal characteristic $p\neq 0$ with infinite residue class field $k$.

\begin{prop}
Let $A_K$ be an abelian variety.  Then the prime-to-$p$-part of Gro\-then\-dieck's pairing is
perfect. If $A_K$ has semistable reduction, the whole pairing is perfect.
\end{prop}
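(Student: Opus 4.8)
The plan is to prove the two assertions separately, reducing each to structural facts about N\'eron models and their component groups which are available in \cite{sga7} and \cite{Bo97}.

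\emph{The semistable case.} Suppose $A_K$ has semistable reduction, so that the identity components of the special fibres of $A$ and $A'$ are extensions of abelian varieties by tori. By Grothendieck's monodromy theory (\cite{sga7}, IX) the component groups $\oldphi$ and $\oldphi'$ are computed from the character lattices $X$, $X'$ attached to the toric parts of $A$ and $A'$ together with the monodromy pairing $X\times X'\to\bbZ$, which becomes non-degenerate after tensoring with $\bbQ$: up to canonical isomorphism $\oldphi$ and $\oldphi'$ are the cokernels of the two induced maps $X\to\hom(X',\bbZ)$, $X'\to\hom(X,\bbZ)$, which are finite and Pontryagin dual to one another. The one thing to do is then to recall from \cite{sga7} — or to re-derive it from the biextension formalism recalled in \cite{Bo97}, section 4 — that under these identifications the pairing $\gp\colon\oldphi\tensor\oldphi'\to\bbQZ$ is precisely the one deduced from $X\times X'\to\bbZ$. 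Perfectness is then immediate from non-degeneracy of the monodromy pairing.

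\emph{The prime-to-$p$ part.} For a general $A_K$ one cannot expect to reach semistable reduction after a tame, let alone an unramified, extension, so I would argue $\ell$-adically, one prime $\ell\neq p$ at a time. Fix $\ell\neq p$ and pass to the strict henselisation of $R$, so that the inertia group $I$ is the full Galois group of $K$. Consider the Tate modules $T_\ell A$, $T_\ell A'$ with their perfect Weil pairing $T_\ell A\tensor T_\ell A'\to\bbZ_\ell(1)$, and on each the two filtrations given by the fixed part (the $I$-invariants) and the toric part (the image of the Tate module of the toric part of the N\'eron special fibre). The $\ell$-primary parts of $\oldphi$ and $\oldphi'$ are expressible through these data — this is the $\ell$-adic ingredient of the description of component groups underlying \cite{Bo97} — and under that description $\gp$, restricted to the $\ell$-parts, is the pairing induced by the Weil pairing together with the monodromy operator. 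Grothendieck's orthogonality theorem (\cite{sga7}, IX) says that, under the Weil pairing, the fixed part of $T_\ell A$ is the exact annihilator of the toric part of $T_\ell A'$ and vice versa; feeding this into the description shows that the induced pairing on the $\ell$-parts of $\oldphi$ and $\oldphi'$ is non-degenerate. Letting $\ell$ range over all primes $\neq p$ yields perfectness of the prime-to-$p$ part of $\gp$.

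The delicate point — in both parts, but above all in the second — is the identification of $\gp$, which is defined through the Poincar\'e biextension $\scrP$ and the connecting maps attached to the sequence $0\to\bbG_{m,R}\to\scrG\to i_\ast\bbZ\to 0$, with the monodromy, respectively Weil, pairing: one has to follow the boundary maps in the $\biext$-sequence and match them with the connecting homomorphisms on the character-lattice, respectively $\ell$-adic, side. Once this compatibility is in hand — it is contained in \cite{sga7}, IX, and can also be extracted from \cite{Bo97} — the conclusion is formal, resting only on non-degeneracy of the Weil pairing and on Grothendieck's orthogonality relations.
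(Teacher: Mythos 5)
The paper does not reprove this statement at all: it simply cites \cite{Be01}, Theorem 3.7 for the prime-to-$p$ part and \cite{We} for the semistable case. Measured against that, the trouble with your sketch is that the two ``recollections'' on which it rests are precisely the content of those cited theorems, not facts available in \cite{sga7} or \cite{Bo97}. In the semistable case, Grothendieck does show in \cite{sga7}, IX that $\oldphi$ and $\oldphi'$ are the (dual) cokernels of the maps induced by the monodromy pairing on the character lattices $X$, $X'$; but the statement you describe as ``the one thing to do'' --- that under these identifications $\gp$ \emph{is} the pairing deduced from the monodromy pairing --- is exactly Werner's theorem \cite{We}. It was not established in \cite{sga7} and does not fall out of the biextension formalism of \cite{Bo97}, section 4 by mere unwinding; tracing the Poincar\'e biextension through the connecting maps of $0\to\bbG_{m,R}\to\scrG\to i_\ast\bbZ\to 0$ and matching it with the monodromy description is the substance of that paper. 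So your first half is a correct outline of Werner's strategy, but as a proof it assumes the theorem it should prove.

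The gap is more serious in the prime-to-$p$ part. There the statement is for arbitrary (not semistable) reduction, and your argument needs two inputs: a description of the $\ell$-primary parts of $\oldphi$, $\oldphi'$ in terms of the fixed and toric filtrations on $T_\ell A$, $T_\ell A'$, and the identification of the restriction of $\gp$ to the $\ell$-parts with the pairing induced by the Weil pairing and the monodromy data. The orthogonality theorem of \cite{sga7}, IX is indeed general, but the second input is not ``contained in SGA7, IX''; outside the semistable case the toric part is only available via rigid uniformization (Raynaud extensions, the framework of \cite{Bo97}), and the compatibility of $\gp$ with the resulting $\ell$-adic description is exactly what Bertapelle proves in \cite{Be01} --- if it could be ``extracted'' formally, that paper would not exist. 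As written, your non-degeneracy deduction is therefore circular at the decisive step. A correct write-up for this paper should either cite \cite{We} and \cite{Be01} as the paper does, or genuinely carry out the boundary-map comparison you defer, which is a substantial piece of work rather than a formal check.
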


\begin{proof}
The first part is \cite{Be01}, Theorem 3.7, the second \cite{We}.
\end{proof}

Grothendieck's pairing induces a morphism $\oldphi'\to\hom(\oldphi,\bbQZ)\fedeq\oldphi^\ast$. It
fits into the following diagram:

\begin{prop}\label{gp_diag}
There is the following commutative diagram of sheaves with respect to the smooth topology
\begin{diagram}
0 & \rTo & A'^0              & \rTo & A'             & \rTo & i_\ast\oldphi' & \rTo & 0\\
  &      & \dInto            &      & \dEqual        &      & \dTo>\gp \\
0 & \rTo & \sext^1(A,\bbG_m) & \rTo & \sext(A,\scrG) & \rTo & \sext^1(A,i_\ast\bbZ)& = i_\ast\oldphi^\ast, %
\end{diagram}
in which the sheaves in the second line can be represented uniquely by smooth schemes. In particular, 
the sheaf $\sext^1(A,\bbG_m)$ can be represented by an open and closed subscheme of $A'$, whose 
components correspond to the elements of
$\ker\gp$. This is a $p$-group.
\end{prop}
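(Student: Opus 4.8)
The plan is to produce the lower row, together with the vertical arrow $\gp$ and the identification $\sext^1(A,\scrG)=A'$, from the long exact $\sext$-sequence attached to the fundamental sequence $0\to\bbG_{m,R}\to\scrG\to i_\ast\bbZ\to 0$, and then to read off the description of $\sext^1(A,\bbG_m)$ by comparison with the canonical sequence of the N\'eron model $A'$. First I would apply the functors $\sext^q(A,-)$ for the smooth topology over $\spec R$ to that sequence; the relevant part of the resulting long exact sequence is
\[
\shom(A,i_\ast\bbZ)\longto\sext^1(A,\bbG_m)\longto\sext^1(A,\scrG)\longto\sext^1(A,i_\ast\bbZ)\longto\sext^2(A,\bbG_m).
\]
By the adjunction $i^\ast\dashv i_\ast$ one has $\shom(A,i_\ast\bbZ)=i_\ast\shom_k(A_k,\bbZ)$ and $\sext^q(A,i_\ast\bbZ)=i_\ast\sext^q_k(A_k,\bbZ)$. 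Since $A_k^0$ is connected and $\oldphi$ is finite, $\shom_k(A_k,\bbZ)=0$; and resolving $0\to\bbZ\to\bbQ\to\bbQZ\to 0$ over $k$, using that $A_k^0$ is divisible in the smooth topology (so $\shom(A_k^0,\bbQZ)=0$) and that $\bbQ$ is uniquely divisible (so $\shom(A_k,\bbQ)=\sext^1(A_k,\bbQ)=0$), one gets $\sext^1_k(A_k,\bbZ)=\shom_k(\oldphi,\bbQZ)=\oldphi^\ast$, i.e.\ $\sext^1(A,i_\ast\bbZ)=i_\ast\oldphi^\ast$. Hence the lower row is the left exact sequence $0\to\sext^1(A,\bbG_m)\to\sext^1(A,\scrG)\to i_\ast\oldphi^\ast$ claimed in the diagram.

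The heart of the matter --- and the step I expect to be the main obstacle --- is the identification $\sext^1(A,\scrG)\iso A'$. The idea is to show that $\sext^1(A,\scrG)$ is representable by a smooth separated $R$-group scheme of finite type whose generic fibre is $\sext^1(A_K,\bbG_{m,K})=A'_K$ (Barsotti--Weil over $K$) and which has the N\'eron mapping property; by uniqueness of N\'eron models it is then $A'$. This is where the biextension formalism recalled in the introduction enters in full force: the essential input is that $\biext^1(A,A',\scrG)\iso\biext^1(A_K,A'_K,\bbG_{m,K})$ --- that every biextension of $A$, $A'$ by $\scrG$, and likewise every extension of $A$ by $\scrG$, is already determined by its generic fibre, which rests on $\scrG$ being the N\'eron model of $\bbG_m$ and on the identifications of \cite{Bo97} and \cite{sga7}. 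This both produces the canonical morphism $A'\to\sext^1(A,\scrG)$ attached to the Poincar\'e class $\scrP$ and forces it to be bijective on sections over smooth $R$-schemes. Granting this, all three sheaves of the lower row are representable: $\sext^1(A,\scrG)=A'$ is smooth, $i_\ast\oldphi^\ast$ is finite \'etale over $k$, and $\sext^1(A,\bbG_m)$ will be open and closed in $A'$ by the last step; representations of sheaves being unique by Yoneda, this gives the parenthetical uniqueness assertion.

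It then remains to chase the two squares. Unwinding the definition of $\gp$ from the introduction, the map $\sext^1(A,\scrG)\to\sext^1(A,i_\ast\bbZ)$ becomes, under $\sext^1(A,\scrG)=A'$ and $\sext^1(A,i_\ast\bbZ)=i_\ast\oldphi^\ast$, the composite of the canonical surjection $A'\onto i_\ast\oldphi'$ with $\gp$; this identifies the right-hand vertical arrow with $\gp$ and makes the right square commute. Consequently $\sext^1(A,\bbG_m)=\ker\bigl(\sext^1(A,\scrG)\to\sext^1(A,i_\ast\bbZ)\bigr)$ is the preimage under $A'\onto i_\ast\oldphi'$ of $\ker\gp\sub\oldphi'$; since $A'\onto i_\ast\oldphi'$ is smooth with the connected components of $A'$ as its fibres, this preimage is exactly the union of those components of $A'$ which map into $\ker\gp$ --- an open and closed subgroup scheme of $A'$, containing $A'^0$ (the preimage of $0$), with group of components $\ker\gp$. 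This yields the left vertical inclusion, the commutativity of the left square, and the stated description of $\sext^1(A,\bbG_m)$. Finally, $\ker\gp$ is a $p$-group ($p$ the residue characteristic): the prime-to-$p$-part of Grothendieck's pairing is perfect (as recalled above), so the induced map $\oldphi'\to\oldphi^\ast$ is injective on prime-to-$p$-parts, whence $\ker\gp$ carries no prime-to-$p$-torsion.
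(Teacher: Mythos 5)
Your proposal is correct and takes essentially the same route as the paper: what you sketch (the long exact $\sext$-sequence attached to $0\to\bbG_{m,R}\to\scrG\to i_\ast\bbZ\to 0$, the identification $\sext^1(A,\scrG)\iso A'$ by rigidity of extensions over N\'eron models, and the compatibility of the connecting map with $\gp$ --- the step you rightly flag and take from the literature) is precisely the content of \cite{Bo97}, section 5, which the paper cites for the existence of the diagram, while your preimage-of-$\ker\gp$ and prime-to-$p$ arguments are exactly the paper's ``snake lemma plus \cite{Be01}, Theorem 3.7''. One small caution: $A'$ is smooth over $R$, so its generic fibre is dense and $A'$ is connected as a scheme; hence ``open and closed'' and ``components'' must be read relative to the special fibre (i.e.\ the component group $\oldphi'$), as in the statement itself, rather than literally as ``the connected components of $A'$'' being the fibres of $A'\to i_\ast\oldphi'$.
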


\begin{proof}
For existence of the diagram see \cite{Bo97}, section 5, the last assertion is the snake lemma and
\cite{Be01}, Theorem 3.7.
\end{proof}

Let us denote by $\gp$ both the pairing
$\oldphi\times\oldphi'\to\bbQZ$ itself and the induced morphism $\oldphi'\to\oldphi^\ast$ (or
$\oldphi\to(\oldphi')^\ast$, respectively). Since $\oldphi$ and $\oldphi'$ are finite groups, the
induced morphisms are bijective, i.\,e.\ Grothendieck's pairing is perfect, if and only if the
induced morphisms are injective. Consequently, $\gp$ is perfect, if and only if the schemes
representing $\sext^1(A,\bbG_m)$ and $\sext^1(A',\bbG_m)$ are connected.

\section{Weil Restriction}

As before, let $R$ be a discrete valuation ring, $K\defeq\quot R$ its field of fractions and let
$A_K$ be an abelian variety over $K$ with dual $A'_K$ and N\'eron models $A_R$ and $A'_R$ over $R$.
It
is known that there exists a Galois extension $L/K$ such that $A_L\defeq A_K\tensor_K L$ has
semistable reduction (\cite{sga7}, IX, 3.6). In this case, Grothendieck's pairing for $A_L$
and $A'_L$ is perfect (\cite{We}). We want to examine the relationship between Grothendieck's
pairing for $A_K$ and $A_L$:
Let $S$ be the integral closure of $R$ in $L$.
It induces a residue class field extension $\ell/k$. The N\'eron models of $A_L$ and $A'_L$ will be
denoted by $A_S$ and $A'_{S}$.  The pairings of groups of components of the N\'eron models over $R$
and $S$ can be summarised by the following commutative diagram of $\gal(k_s/\ell)$-modules.
\begin{diagram}[width=0.25in,l>=0.35in] 
\oldphi_{A_R} & \times & \oldphi_{A'_R} & \rTo & \bbQZ \\
\dTo          &        & \dTo           &      & \dTo>e    \\
\oldphi_{A_S} & \times & \oldphi_{A'_S} & \rTo & \bbQZ,
\end{diagram}
where $e$ is the ramification index\index{ramification index} of $L/K$. (\cite{sga7}, XVII, 7.3.5).
Unfortunately, we cannot conclude that the first pairing is perfect if the second one is. However,
we will use the technique of \emph{Weil restriction}\index{Weil restriction} to infer a partial
result. All Weil restrictions we will encounter are representable by smooth group schemes,
\cite{blr}, 7.6, theorem 4. Let
\[
X_K\defeq\scrR_{L/K} A_L \qquad\text{and}\qquad X_R\defeq\scrR_{S/R} A_S
\]
be the Weil restriction of the abelian variety $A_L$ and its N\'eron model $A_S$.

\begin{prop}
In this situation, $X_R$ is the N\'eron model of $X_K$.
\end{prop}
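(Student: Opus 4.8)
The plan is to verify directly that $X_R$ has the three defining features of a Néron model of $X_K$: it is a smooth separated $R$-scheme of finite type, its generic fibre is $X_K$, and it satisfies the Néron mapping property. All the Weil restrictions in sight are taken along $\spec S\to\spec R$, so I first record that this morphism is finite and locally free: $S/R$ is finite because $L/K$ is finite separable (being Galois), and $S$ is torsion-free, hence free, over the principal ideal domain $R$. Weil restriction along a finite locally free morphism preserves the properties ``of finite type'', ``separated'', ``affine over the base'' and ``smooth'' (\cite{blr}, 7.6), and the Néron model $A_S$ of the abelian variety $A_L$ has all of these over $S$; hence $X_R=\scrR_{S/R}A_S$ is a smooth separated group scheme of finite type over $R$.

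For the generic fibre, Weil restriction along a finite locally free morphism commutes with arbitrary base change. Applying this to $R\to K$, using $S\tensor_R K=L$ together with the fact that the generic fibre of $A_S$ is $A_L$, gives $X_R\times_R K=\scrR_{L/K}(A_S\times_R K)=\scrR_{L/K}A_L=X_K$. Since $X_R$ is $R$-flat, $X_K$ is moreover schematically dense in $X_R$.

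The substance of the proof is the Néron mapping property. Let $T$ be a smooth $R$-scheme and put $T_S\defeq T\times_R S$, which is smooth over $S$ by base change. Combining the adjunction that defines $\scrR_{S/R}$, the Néron mapping property of $A_S$ over $S$ (legitimate precisely because $T_S$ is $S$-smooth), the identification $T_S\times_S L=T\times_R L=T_K\times_K L$, and the adjunction defining $\scrR_{L/K}$, one obtains a chain of bijections
\[
X_R(T)=A_S(T_S)=A_L(T_S\times_S L)=A_L(T_K\times_K L)=X_K(T_K).
\]
It then remains to see that this composite is the canonical restriction map $X_R(T)\to X_K(T_K)$; this is a formal consequence of the naturality of the adjunctions involved together with the identification of generic fibres obtained in the previous step. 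By uniqueness of Néron models, $X_R$ is therefore the Néron model of $X_K$.

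I do not expect a genuine obstacle. The only points deserving care are the observation that $S$ is finite locally free over $R$ (so that the formal properties of Weil restriction, and in particular its compatibility with base change, are at one's disposal), and the check that the displayed chain of adjunction isomorphisms really computes the restriction-to-the-generic-fibre map and not merely some abstract bijection; both are routine diagram chases with the universal properties of Weil restriction and of Néron models.
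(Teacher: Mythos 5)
Your proof is correct and follows essentially the same route as the paper: the core is the same chain of identifications $\hom_R(T,X_R)=\hom_S(T\tensor_R S,A_S)=\hom_L(T\tensor_R L,A_L)=\hom_K(T\tensor_R K,X_K)$, combining the adjunction defining Weil restriction with the N\'eron mapping property of $A_S$. The additional verifications you carry out (smoothness, separatedness, finite type, identification of the generic fibre) are handled in the paper by the citation of \cite{blr}, 7.6, theorem 4, given just before the proposition.
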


\begin{proof}
Let $T$ be a smooth $R$-scheme. The calculation
\begin{align*}
\hom_R(T,X_R) & =\hom_S(T\tensor_R S,A_S) \\
              & =\hom_L(T\tensor_R K\tensor_K L,A_L) \\
              & =\hom_K(T\tensor_R K,X_K)
\end{align*}
shows that $X_R$ has the universal property of the N\'eron model of $X_K$.
\end{proof}

By $\oldphi_{X_R}$, $\oldphi_{A_S}$ etc., we denote the corresponding groups of components. In this
situation, we have the following proposition:

\begin{prop}\label{weil_phi_iso}
The canonical morphism $X_R\tensor_R S\to A_S$ induces a morphism
\[
\oldphi_{X_R}\tensor_k \ell\longto\oldphi_{A_S}.
\]
If $\ell/k$ is purely inseparable, this is an isomorphism.
\end{prop}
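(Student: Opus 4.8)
\noindent The plan is to realise the morphism as the one induced by the counit $c\colon X_R\tensor_R S\to A_S$ of the Weil restriction adjunction, and then to compute both component groups on points over strict henselisations. We assume throughout that $R$ is henselian (so that $S$ is local), the general case reducing to this one by passing to the henselisation; write $R^{\sh}$ for the strict henselisation, $K^{\sh}=\quot R^{\sh}$, similarly $S^{\sh}$ and $L^{\sh}=\quot S^{\sh}$, and let $k_s$, $\ell_s$ be the residue fields of $R^{\sh}$, $S^{\sh}$.

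First I would observe that $c$ induces on special fibres a morphism $c_\ell\colon (X_R)_k\tensor_k\ell\to(A_S)_\ell$, and that $\pi_0$ commutes with the field extension $\ell/k$ (the identity component of a group scheme locally of finite type over a field is geometrically connected, having a rational point), so that $\pi_0\!\big((X_R)_k\tensor_k\ell\big)=\oldphi_{X_R}\tensor_k\ell$; applying $\pi_0$ to $c_\ell$ gives the morphism of the statement, and it is $\gal(k_s/\ell)$-equivariant because it is built from canonical data.

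Next, since $X_R$ and $A_S$ are the N\'eron models of $X_K$ and $A_L$, one has the usual descriptions $\oldphi_{X_R}(k_s)=X_R(R^{\sh})/X_R^0(R^{\sh})$ and $\oldphi_{A_S}(\ell_s)=A_S(S^{\sh})/A_S^0(S^{\sh})$ (cf.\ \cite{blr}): smoothness and henselianness make the reductions to special-fibre points surjective, and a section reducing to the origin lands in the open subgroup scheme $X_R^0$, resp.\ $A_S^0$. Through the embedding $R^{\sh}\into S^{\sh}$ the morphism of the statement is the one $c$ induces on these quotients, so it suffices to show that $c$ carries $X_R(R^{\sh})$ bijectively onto $A_S(S^{\sh})$ and $X_R^0(R^{\sh})$ bijectively onto $A_S^0(S^{\sh})$.

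Here the hypothesis enters, always through the single fact that for $\ell/k$ purely inseparable the ring $\ell\tensor_k k_s$ stays a field --- indeed the separably closed field $\ell_s$, since $\ell k_s$ is separably closed ($\bar k/k_s$ being purely inseparable). On the one hand this forces $S\tensor_R R^{\sh}$ to be a strictly henselian discrete valuation ring, so $S\tensor_R R^{\sh}=S^{\sh}$ and $L\tensor_K K^{\sh}=L^{\sh}$; then the map $X_R(R^{\sh})\to A_S(S^{\sh})$ induced by $c$ is precisely the adjunction bijection $\hom_R(\spec R^{\sh},\scrR_{S/R}A_S)=\hom_S(\spec S^{\sh},A_S)$, an isomorphism of groups. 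On the other hand --- and this step I expect to be the real obstacle --- one must identify the identity components. Weil restriction along the finite flat morphism $S/R$ preserves smoothness and open immersions (cf.\ \cite{blr}, 7.6), so $\scrR_{S/R}(A_S^0)$ is a smooth, open (hence also closed) subgroup scheme of $\scrR_{S/R}(A_S)=X_R$; and its special fibre $\scrR_{\bar S/k}(A_S^0\tensor_S\bar S)$, with $\bar S=S\tensor_R k$, is connected precisely because $\bar S$ is a finite $k$-algebra whose residue field $\ell$ is purely inseparable over $k$, so that after base change to $\bar k$ one obtains an Artinian \emph{local} $\bar k$-algebra, over which the Weil restriction of a smooth group with connected fibre is connected. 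Being open, closed, connected and containing the identity, $\scrR_{S/R}(A_S^0)$ is the identity component $X_R^0$; hence $X_R^0(R^{\sh})=A_S^0(R^{\sh}\tensor_R S)=A_S^0(S^{\sh})$, again via $c$. Passing to quotients yields the isomorphism $\oldphi_{X_R}\tensor_k\ell\iso\oldphi_{A_S}$. Without the hypothesis both steps break down: $S\tensor_R R^{\sh}$ would in general decompose into several strictly henselian valuation rings, and the special fibre of $\scrR_{S/R}(A_S^0)$ would no longer be connected.
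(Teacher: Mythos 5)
Your argument is sound, but be aware that the paper itself offers no proof of this proposition: it simply cites \cite{bb}, Proposition 1.1, and what you have written is in essence a reconstruction of the Bertapelle--Bosch argument rather than a variant of anything in the text. Your two pillars are exactly theirs: the adjunction $X_R(R^{\sh})=A_S(S\tensor_R R^{\sh})$ combined with the observation that purely inseparable $\ell/k$ makes $S\tensor_R R^{\sh}$ local, normal and strictly henselian, hence equal to $S^{\sh}$; and the identification of identity components through the special fibre $\scrR_{\bar S/k}(A_S^0\tensor_S\bar S)$ with $\bar S=S\tensor_R k$ Artinian local. The one statement that carries all the weight -- connectedness of the Weil restriction of a smooth group scheme with connected special fibre along a finite local Artinian algebra whose residue field equals the base field -- is asserted rather than proved; it follows from the congruence filtration of the Artinian algebra, which exhibits the reduction map to the closed fibre as smooth and surjective with successive kernels isomorphic to vector groups $\mathrm{Lie}(G_0)\tensor I_j/I_{j+1}$, and you should either sketch this or cite it (it is precisely what \cite{bb}, \S 1, carries out). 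A few smaller points deserve a word but are not gaps: representability of $\scrR_{S/R}(A_S^0)$ and preservation of open immersions (\cite{blr}, 7.6); the fact that only the special fibres matter when you declare $\scrR_{S/R}(A_S^0)=X_R^0$ (your parenthetical ``open hence also closed'' is a fibrewise statement, and the generic fibre $\scrR_{L/K}(A_L)$ is connected in any case); and the compatibility of the bijection on $R^{\sh}$-points with the map obtained by applying $\pi_0$ to the base-changed counit, which is a short diagram chase. With those references or sketches supplied, your proof is complete and is the standard route to the result the paper quotes.
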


\begin{proof}
\cite{bb}, proposition 1.1.
\end{proof}

From now on, we assume $R$ to be a strictly henselian discrete valuation ring with perfect, i.\,e.\
algebraically closed residue class field of characteristic $p\neq 0$. In this case, every finite
extension
of $k$ is trivial. In particular, it is purely inseparable and we can identify $\oldphi_{A_S}$ and
$\oldphi_{X_R}$ by means of this proposition.
\par
In this situation, we can allow a tamely ramified extension of $K$ to test whether or not
Grothendieck's pairing is perfect:

\begin{prop}\label{gp_perf_tamely}
Let $R$ be as stated above and let $L/K$ be a tamely ramified Galois extension. \index{tamely
ramified extension}\index{Grothendieck's pairing} Consider the following assertions:
\begin{enumprop}
  \item Grothendieck's pairing for $A_S$ and $A'_S$ is perfect.
  \item Grothendieck's pairing for $X_R$ and $X'_R$ is perfect.
  \item Grothendieck's pairing for $A_R$ and $A'_R$ is perfect.
\end{enumprop}
Then we have $\enum 1\iff\enum 2\then\enum 3$.
\end{prop}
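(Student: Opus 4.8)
The plan is to test perfectness of all three pairings on the $p$-primary parts of the relevant groups of components --- which is harmless since, by the first Proposition of the introduction, the prime-to-$p$ part of Grothendieck's pairing is always perfect --- and to move perfectness back and forth along the two canonical morphisms attached to the Weil restriction. First I would set these up. The formation of the dual abelian variety commutes with Weil restriction along the finite separable extension $L/K$, so $X'_K\iso\scrR_{L/K}(A'_L)$, and hence, by the Proposition preceding this one applied to $A'$, $X'_R\iso\scrR_{S/R}(A'_S)$ is the N\'eron model of $X'_K$. The counit of the adjunction between base change along $R\to S$ and Weil restriction $\scrR_{S/R}$ is the canonical morphism $X_R\tensor_R S\to A_S$; by Proposition~\ref{weil_phi_iso}, and since $k$ is algebraically closed so that $\ell=k$, it induces an isomorphism $\oldphi_{X_R}\isoto\oldphi_{A_S}$, and likewise $\oldphi_{X'_R}\isoto\oldphi_{A'_S}$. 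The unit of the same adjunction, combined with the N\'eron property of the smooth $S$-scheme $A_R\tensor_R S$, yields $R$-morphisms $u\colon A_R\to X_R$ and $u'\colon A'_R\to X'_R$; a short triangle-identity argument shows that the composites $\oldphi_{A_R}\to\oldphi_{X_R}\isoto\oldphi_{A_S}$ and $\oldphi_{A'_R}\to\oldphi_{X'_R}\isoto\oldphi_{A'_S}$ are precisely the base-change homomorphisms occurring in the diagram of \cite{sga7}, XVII, 7.3.5. Finally --- and this is the point at which tameness enters --- the ramification index $e$ of $L/K$ is prime to $p$, and the kernel of the base-change map $\oldphi_{A_R}\to\oldphi_{A_S}$ is annihilated by $e$ (theory of tame base change of N\'eron models); hence $u$ and $u'$ induce injections on $p$-primary parts.

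For $\enum 1\iff\enum 2$ I would use the compatibility of Grothendieck's pairing with Weil restriction: the Poincar\'e biextension of $(X_K,X'_K)$ is obtained from that of $(A_L,A'_L)$ by applying $\scrR_{L/K}$ and pushing out along the norm $\scrR_{L/K}\bbG_{m,L}\to\bbG_{m,K}$, and --- after descent to the N\'eron models --- this identification is compatible with the isomorphisms $\oldphi_{X_R}\iso\oldphi_{A_S}$, $\oldphi_{X'_R}\iso\oldphi_{A'_S}$ of the first paragraph. Running $\scrP$ through it, $\gp_{X_R}$ corresponds, under these isomorphisms, to $\gp_{A_S}$ up to a factor dividing $e$; since $e$ is prime to $p$ and perfectness is decided on the $p$-primary parts, $\gp_{X_R}$ is perfect if and only if $\gp_{A_S}$ is, which is $\enum 1\iff\enum 2$. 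I expect this biextension-level bookkeeping --- identifying the Poincar\'e biextension of a Weil restriction with the norm of the Weil-restricted Poincar\'e biextension, carrying it down to the N\'eron models, and matching the resulting map on component groups with the isomorphism of Proposition~\ref{weil_phi_iso} --- to be the main obstacle; once it is in place the remainder is formal.

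For $\enum 2\then\enum 3$, let $v\colon X_R\to A_R$ be the N\'eron extension of the transpose of $u'$; by biduality the transpose of $v$ is again $u'$, and functoriality of Grothendieck's pairing for $v$ and its transpose gives
\[
\gp_{A_R}(v_\ast\eta,\psi)=\gp_{X_R}(\eta,u'_\ast\psi)\qquad(\eta\in\oldphi_{X_R},\ \psi\in\oldphi_{A'_R}).
\]
Assume $\enum 2$ and let $\psi\in\oldphi_{A'_R}$ lie in the right kernel of $\gp_{A_R}$, i.e.\ in $\ker(\oldphi_{A'_R}\to\oldphi_{A_R}^\ast)$; by Proposition~\ref{gp_diag} this kernel is a $p$-group. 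Then $\gp_{A_R}(v_\ast\eta,\psi)=0$ for every $\eta\in\oldphi_{X_R}$, so $u'_\ast\psi$ lies in the right kernel of $\gp_{X_R}$, which is trivial by $\enum 2$; hence $u'_\ast\psi=0$, and since $u'_\ast$ is injective on $p$-primary parts we obtain $\psi=0$. Thus $\gp_{A_R}$ has trivial right kernel, and the symmetric argument --- with $u$ and its transpose in place of $v$ and $u'$ --- disposes of the left kernel, which is likewise a $p$-group by Proposition~\ref{gp_diag} applied to the dual pair $(A'_R,A_R)$. By the discussion following Proposition~\ref{gp_diag}, $\gp_{A_R}$ is therefore perfect.
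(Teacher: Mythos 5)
Your proposal is correct in substance and, at the level of rigor of the paper's own sketch (which mostly quotes \cite{bb}), it follows the same skeleton: \enum 1 $\iff$ \enum 2 is reduced to comparing $\gp_{X_R}$ with $\gp_{A_S}$ under the identification of Proposition \ref{weil_phi_iso} together with perfectness of the prime-to-$p$ part, and \enum 2 $\then$ \enum 3 comes from the morphisms relating $A_R$ and $X_R$ plus the fact that $\ker\gp$ is a $p$-group (Proposition \ref{gp_diag}). The differences are in the mechanism. For \enum 1 $\iff$ \enum 2 the paper simply quotes the identity $[e_{L/K}]\circ\gp_{X_R}=[n]\circ\gp_{A_S}$ of \cite{bb}, Lemma 2.2, whereas you re-derive it from the norm-pushout description of the Poincar\'e biextension of $(X_K,X'_K)$; your formulation ``agrees up to a factor dividing $e$'' is not quite the shape the lemma actually takes, but since anything prime to $p$ acts invertibly on $p$-primary parts the conclusion is the same, and you correctly identify this bookkeeping as the real content. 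For \enum 2 $\then$ \enum 3 the paper composes $A_R\to X_R\to A_R$ (canonical map followed by the norm), notes the composite is $[n]$, and transports this through the smooth schemes representing $\sext^1(-,\bbG_m)$ to obtain $\ker\gp_{A_R}\to\ker\gp_{X_R}\to\ker\gp_{A_R}$ with composite $[n]$, prime to $p$; you instead use the adjunction formula $\gp_{A_R}(v_\ast\eta,\psi)=\gp_{X_R}(\eta,u'_\ast\psi)$ together with injectivity of $u_\ast,u'_\ast$ on $p$-parts. That is a clean alternative, but note that your injectivity input (kernel of $\oldphi_{A_R}\to\oldphi_{A_S}$ killed by $e$, invoked from the tame base-change theory of N\'eron models) is an extra external theorem; you get it for free from the paper's observation that the composite $\oldphi_{A_R}\to\oldphi_{X_R}\to\oldphi_{A_R}$ is multiplication by $n$. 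Also, your route quietly uses the compatibilities $X'_K\iso\scrR_{L/K}A'_L$ and ``transpose of the norm equals the canonical inclusion of the dual'', which are established in \cite{bb} and should be cited rather than asserted.
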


We sketch the proof as given in \cite{bb}, lemma 2.2 and corollary 3.1:

\begin{proof}
Let $n\defeq[L:K]$. Since $k$ is algebraically closed, we have $n=e_{L/K}$. Since $L/K$ is tamely
 ramified, $p$ does not divide $n$. As shown in \cite{bb}, Lemma 2.2 we have the
following equality:
\[
[e_{L/K}]\circ \gp_{X_R}=[n]\circ\gp_{A_S},
\]
where $[n]$ denotes the $n$-multiplication on the appropriate group of components. It follows that
the kernels of both compositions coincide (after identification as in proposition
\ref{weil_phi_iso}). Since $\gp$ is an isomorphism on the prime-to-$p$-part and since $n$ is prime
to $p$, the kernels of $\gp_{X_R}$ and $\gp_{A_S}$ coincide. This shows the equivalence of \enum 1
and \enum 2.
\par
Consider the canonical morphisms $A_K\into X_K$ and the norm map $X_K\to A_K$. Their composition is
multiplication with $n$, cf.\ the proof of \cite{bb}, corollary 3.1. These morphisms give rise to
the following morphisms of N\'eron models:
\[
A_R\longto X_R\longto A_R,
\]
such that their compositions is the multiplication with $n$. Hence, there are morphisms of the
smooth schemes which represent $\sext^1(-,\bbG_m)$ and of their component groups. Since $\ker \gp$
is isomorphic to the group of components of the smooth scheme which represents
$\sext^1(A_R,\bbG_m)$, proposition \ref{gp_diag}, we have two morphisms
\[
\ker\gp_{A_R}\longto  \ker\gp_{X_R}\longto \ker\gp_{A_R},
\]
such that the composition is  multiplication by $n$. As these groups are $p$-groups by theorem
\ref{gp_diag} and $n$ is prime to $p$, the first morphism is injective; hence, \impl 2 3.
\end{proof}

\section{Abelian Varieties of Small Dimension}

We have seen that  perfectness of Grothendieck's pairing can be tested after a tamely ramified
extension of the ground field. We would like to derive a property of abelian varieties which acquire
semistable reduction\index{semistable reduction} after a tamely ramified extension $L/K$.
\index{tamely ramified extension} As Grothendieck's pairing of $A_L$ is perfect, we can conclude
that Grothendieck's pairing of $A_K$ is perfect, too. We will prove that abelian varieties of
small dimension, depending on the residue class field characteristic $p$, achieve
semistable reduction after a tamely ramified extension. This provides another new clue
that Grothendieck's conjecture is true if the residue class field $k$ is perfect. As
before, let $R$ be a strictly henselian, discrete valuation ring with algebraically closed residue
class field. In this case, the integral closure $S$ of $R$ in $L$ is a strictly henselian discrete
valuation ring with algebraically closed residue class field. In particular, the inertia
subgroups\index{inertia subgroup} of $\gal(K_s/K)$ and $\gal(K_s/L)$ for a fixed separable closure
$K_s$ of $K$ coincide with the absolute Galois groups of $K$ and $L$.

\begin{defn}[Tate module]\index{Tate module}
Let $A_K$ be an abelian variety and let $\ell\neq p$ be a prime. The \emph{Tate module} of $A_K$ is
the $\gal(K_s/K)$-module
\[
T_\ell(A_K)\defeq\projlim_{n\in\bbN} A_{K,\ell^n}(K_s).
\]
\end{defn}

As an abelian group, the Tate module is isomorphic to $\hat\bbZ_\ell^{2g}$, where $g$ is the
dimension of $A_K$. 
\par
Let $G\defeq\gal(K_s/K)$ denote the absolute Galois group of $K$ and let $I\sub G$ denote the
inertia subgroup. As we have seen, they coincide. Nonetheless, we use this notation for a coherent
statement of the following theorems. Our point of departure is the \emph{Galois criterion for
semistable reduction} (cf.\ \cite{sga7}, IX, 3.5):

\begin{prop}[Galois Criterion for Semistable Reduction]\label{galois_crit}\index{Galois
criterion}\index{semistable reduction}
\varhspace Let $A_K$ be an abelian variety over $K$ and let $\ell\neq p$ be a prime. Then the
following
statements are equivalent:
\begin{enumprop}
  \item $A_K$ has semistable reduction.
  \item There exists an $I$-submodule $T'\sub T\defeq T_\ell(A_K)$, such that $I$ operates trivially
   on $T'$ and $T\mod T'$.
\qed
\end{enumprop}
\end{prop}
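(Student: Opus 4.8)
The plan is to prove the two implications separately, in each case passing between the reduction type of the N\'eron model and the action of $I$ on $T\defeq T_\ell(A_K)$ by means of the structure theory of N\'eron models of semiabelian schemes (\cite{sga7}, IX, \S2--3) together with the autoduality coming from the Weil pairing. The direction \impl 1 2 is the formal one; \impl 2 1 carries the real content.

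For \impl 1 2, suppose $A_K$ is semistable and let $A^0$ be the identity component of its N\'eron model over $R$, so the special fibre sits in an exact sequence $0\to\mathscr T\to A^0_k\to\mathscr B\to 0$ with $\mathscr T$ a torus of dimension $t$ and $\mathscr B$ an abelian variety of dimension $a=g-t$. Since $\ell\neq p$, each $A^0[\ell^n]$ is quasi-finite and \'etale over $R$; let $F_n\sub A^0[\ell^n]$ be its maximal finite subgroup scheme. Being finite \'etale over the strictly henselian ring $R$, $F_n$ is constant, so $T'\defeq\projlim_n F_n(K_s)$ is an $I$-submodule of $T$, of rank $t+2a$, on which $I$ acts trivially. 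It remains to check that $I$ acts trivially on the rank-$t$ quotient $T\mod T'$. For this I would use the perfect, $G$-equivariant Weil pairing $T_\ell(A_K)\times T_\ell(A'_K)\to\bbZ_\ell(1)$: since $A'_K$ is again semistable with the same toric dimension $t$, the orthogonality between the finite part of $A_K$ and the toric part of $A'_K$ (\cite{sga7}, IX) identifies $(T')^\perp$ with the toric part $\Theta'\sub T_\ell(A'_K)$, which is itself constant over $R$ and hence $I$-trivial; consequently $T\mod T'\iso\hom_{\bbZ_\ell}(\Theta',\bbZ_\ell(1))$. As $\ell\neq p$ and $k$ is algebraically closed, all $\ell$-power roots of unity already lie in $K$, so $I$ acts trivially on $\bbZ_\ell(1)$, hence on $T\mod T'$. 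Thus $T'$ witnesses \enum 2.

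For \impl 2 1, condition \enum 2 forces $(\sigma-1)^2=0$ on $T$ for every $\sigma\in I$, so the image of $I$ in $\aut(T)$ is torsion-free; since $\ell\neq p$ this makes the wild (pro-$p$) inertia act trivially and the action factor through the procyclic tame quotient, whence the image equals $\{1+xN:x\in\bbZ_\ell\}$ for a single nilpotent $N$ with $N^2=0$ --- in particular $I$ acts unipotently on $T$. Now invoke the semistable reduction theorem (\cite{sga7}, IX, 3.6): there is a finite Galois extension $L/K$, with group $\Gamma$, over which $A_L$ becomes semistable. Over $L$ the graded pieces of the monodromy filtration of $T=T_\ell(A_L)$ --- the toric, the abelian and the cotoric part --- carry the trivial action of $\gal(K_s/L)$, so $I$ acts on each of them through the finite quotient $\Gamma$; a finite unipotent action on a lattice is trivial, hence $\Gamma$ acts trivially on every graded piece, that is, on the torus part $\mathscr T_L$ and the abelian part $\mathscr B_L$ of $(A_S)^0_k$. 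By rigidity (there is no nonconstant homomorphism from an abelian variety to a torus) $\Gamma$ then acts trivially on the whole semiabelian special fibre $(A_S)^0_k$, and the comparison of the N\'eron models of $A_K$ over $R$ and of $A_L$ over $S$ (\cite{sga7}, IX) shows that $A^0_k$ is itself semiabelian; i.e.\ $A_K$ has semistable reduction.

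I expect the genuine obstacle to be the final step of \impl 2 1: deducing that $A_K$ is semistable from the triviality of the $\Gamma$-action on $(A_S)^0_k$ is precisely where the descent behaviour of N\'eron models of semiabelian schemes must be used, and where the argument stops being formal linear algebra. Everything else --- the orthogonality computation in \impl 1 2 and the monodromy bookkeeping --- is routine once one knows that finite \'etale group schemes over a strictly henselian base are constant and that $\bbZ_\ell(1)$ is here a trivial $G$-module.
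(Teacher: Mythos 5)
The paper itself offers no proof of this proposition: it is imported from \cite{sga7}, IX, 3.5 (hence the \qedsymbol\ attached to the statement), so your attempt has to stand on its own. Your direction \impl 1 2 is essentially the standard argument and is acceptable at the level of rigour in question: it rests on the orthogonality theorem of \cite{sga7}, IX (the fixed part of $T_\ell(A_K)$ is the orthogonal complement of the toric part of $T_\ell(A'_K)$ under the Weil pairing) plus the fact that $\bbZ_\ell(1)$ is a trivial module over a strictly henselian base with algebraically closed residue field; the ``rigidity'' remark (an automorphism of a semiabelian variety inducing the identity on its torus and on its abelian quotient is the identity, since there are no nonconstant homomorphisms from an abelian variety to a torus) is also fine.

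The problem is \impl 2 1, and it is twofold. First, you invoke the semistable reduction theorem (\cite{sga7}, IX, 3.6) to produce $L$, but in the cited source 3.6 is itself deduced from the criterion 3.5 you are proving (quasi-unipotence of inertia plus the criterion applied over a suitable extension); to avoid circularity you would have to appeal explicitly to an independent proof of 3.6 (e.g.\ via stable reduction of curves and Jacobians), which you do not. Second, and more seriously, the decisive step is absent. After the monodromy bookkeeping, what you have actually established is that $G=\gal(K_s/K)$ acts trivially on the fixed part $T^f_L\sub T$ attached to the semiabelian N\'eron model over $S$ and on $T\mod T^f_L$, and that $\Gamma=\gal(L/K)$ acts trivially on $(A_S)^0_k$; this is merely a restatement of hypothesis \enum 2 with a particular choice of $T'$, not a derivation of \enum 1. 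The implication ``trivial $\Gamma$-action on the semiabelian special fibre over $S$ $\then$ the special fibre of $A_R^0$ has no unipotent part'' is exactly the descent of semistability along a totally ramified (possibly wild) extension, i.e.\ the content of \cite{sga7}, IX, 3.1--3.3, and its proof needs the quantitative theory of loc.\ cit.\ \S2 (the fixed part computes $T_\ell$ of the special fibre of the N\'eron model, of rank $2a+t$; monotonicity of abelian and toric ranks under base change), none of which appears in your sketch -- you yourself flag this step as ``the genuine obstacle''. So the proof, as written, has a genuine gap precisely at its only non-formal point, and in addition its logical order relative to the cited results of SGA 7 would need to be repaired.
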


As the inertia subgroup $I\sub G$ coincides with the whole Galois group $G$, we will not distinguish
between them.  Using the Galois criterion, we can formulate the semistable reduction theorem in
terms of Galois  theory:

\begin{prop}\label{ssred}
Let $A_K$ be an abelian variety.  There exists a normal subgroup $G'\sub G$ of finite index
with the property \enum 2, i.\,e.\ there exists a subgroup $T'\sub T$, stable under the action of
$G'$, such that $G'$ operates trivially on $T'$ and $T/T'$.
\end{prop}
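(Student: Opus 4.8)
The plan is to obtain Proposition~\ref{ssred} directly from the classical semistable reduction theorem by means of the Galois criterion recorded in Proposition~\ref{galois_crit}. First I would invoke \cite{sga7}, IX, 3.6: there is a finite Galois extension $L/K$ such that $A_L\defeq A_K\tensor_K L$ has semistable reduction. Put $G'\defeq\gal(K_s/L)\sub G$. Since $L/K$ is finite and Galois, $G'$ is a normal subgroup of $G$ of finite index $[L:K]$, and $K_s$ serves at the same time as a separable closure of $L$.

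The next step is to match up the Tate modules. For every $n$ one has $A_{L,\ell^n}(K_s)=A_{K,\ell^n}(K_s)$, so passing to the inverse limit gives $T_\ell(A_L)=T_\ell(A_K)=T$ as abelian groups, and the natural $\gal(K_s/L)$-action on $T_\ell(A_L)$ is exactly the restriction to $G'$ of the $G$-action on $T$. Now I would apply Proposition~\ref{galois_crit} to the abelian variety $A_L$ over $L$: since $A_L$ has semistable reduction, there is a submodule $T'\sub T$, stable under the inertia subgroup of $\gal(K_s/L)$, on which that inertia subgroup operates trivially, and for which $T/T'$ also carries the trivial inertia action.

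Finally I would invoke the standing hypothesis, recalled just before Proposition~\ref{galois_crit}, that the integral closure $S$ of $R$ in $L$ is again a strictly henselian discrete valuation ring with algebraically closed residue field, so that the inertia subgroup of $\gal(K_s/L)$ is all of $\gal(K_s/L)=G'$. Hence $T'$ is $G'$-stable and $G'$ operates trivially on $T'$ and on $T/T'$, which is the claim. I expect no serious obstacle: the proposition is essentially a Galois-theoretic reformulation of the semistable reduction theorem, and the only points that need care are the compatibility of the Tate modules and their Galois actions under base change to $L$, and the observation that property~\enum 2, although phrased for the inertia group, may be read off for $G'$ itself precisely because inertia coincides with the full absolute Galois group in the strictly henselian situation.
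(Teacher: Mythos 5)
Your proposal is correct and follows essentially the same route as the paper: invoke \cite{sga7}, IX, 3.6 to get a finite Galois extension $L/K$ over which $A_L$ is semistable, set $G'=\gal(K_s/L)$, apply the Galois criterion to $A_L$, and use that $S$ is again strictly henselian with algebraically closed residue field so that inertia equals all of $G'$. The only difference is that you spell out the identification of $T_\ell(A_L)$ with $T_\ell(A_K)$, which the paper leaves implicit.
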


\begin{proof}
It is known  that $A_K$ acquires semistable reduction after a finite
Galois extension $L/K$, corresponding to a normal subgroup $G'\defeq\gal(K_s/L)\sub G$ of finite
index, cf.\ \cite{sga7}, IX, 3.6. Since $R$ is strictly henselian with algebraically closed residue
class field, the integral closure $S$ of $R$ in $L$ is strictly henselian, again. Therefore, the
inertia  subgroup of $G'$ coincides with $G'$ and we can apply the Galois criterion.
\end{proof}

Our strategy is to enlarge $G'$ by an appropriate pro-$p$-group, such that the resulting field
extension is tamely ramified. Since $R$ is strictly henselian, the theory of tamely ramified extension
of $K$ reduces to the following:

\begin{prop}
Let $R$ be a strictly henselian discrete valuation ring with field of fractions $K$ and residue
class field $k$ of characteristic $p\neq 0$.
\begin{enumprop}
  \item A finite extension $L/K$ is tamely ramified if and only if $p\nmid[L:K]$. Each such
   extension   is a cyclic Galois extension.
  \item Let $L/K$ be any finite Galois extension with corresponding Galois group $G=\gal(L/K)$. Then there 
  exists a unique $p$-Sylow-subgroup $G_p\sub G$.
\end{enumprop}\index{wildly ramified extension}\index{tamely ramified extension}
Consequently, any Galois extension $L/K$ with Galois group $G$ can be splitted up into a tamely
ramified
Galois extensions $L^\tr/K$ with cyclic Galois group and a wildly ramified Galois extension
$L/L^\tr$ with Galois group $G_p$, where $G_p\sub G$ is the unique $p$-Sylow-subgroup
of $G$.
\end{prop}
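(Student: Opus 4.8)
The plan is to establish \enum1 first and then deduce \enum2 and the ``consequently'' clause from it by pure group theory, using in addition only the fact that a composite of tame subextensions is again tame.

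For \enum1 the substance lies in the implication $p\nmid[L:K]\then L/K$ is tame and cyclic, so I would begin there. Write $n\defeq[L:K]$. An extension of degree prime to $p$ is automatically separable (in equal characteristic the inseparable degree would be a power of $p$ dividing $n$), so the integral closure $S$ of $R$ in $L$ is again a strictly henselian discrete valuation ring, and $n=e\,f$ with $e=e_{L/K}$ and $f=[\ell:k]$, where $\ell$ is the residue field of $S$. Since $k$ is separably closed and $\ell$ is again separably closed, $\ell/k$ is purely inseparable, so $f$ is a power of $p$; as $p\nmid n$ this forces $f=1$ and $e=n$. Applying Hensel's lemma to $X^n-1$ over $R$ (its reduction is separable and splits over the separably closed field $k$) gives $\mu_n\sub K$. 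The key step is a second Hensel argument: writing $\pi=u\varpi^n$ with $\pi$ a uniformizer of $R$, $\varpi$ one of $S$ and $u\in S\units$, one extracts an $n$-th root of $u$ in $S\units$ by reducing mod $\frakm_S$, taking an $n$-th root in $\ell=k$ (possible since $k$ is separably closed and $p\nmid n$), lifting it, and applying Hensel once more to the leftover unit, which is congruent to $1$, using that $1$ is a \emph{simple} root of $X^n-1$. This produces a uniformizer $\varpi'$ of $S$ with $(\varpi')^n=\pi$; since $X^n-\pi$ is Eisenstein it is irreducible of degree $n=[L:K]$, so $L=K(\varpi')=K(\pi^{1/n})$, and because $\mu_n\sub K$ this is a Kummer extension, i.e.\ cyclic Galois of order $n$. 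Conversely, if $L/K$ is tamely ramified then by definition $p\nmid e_{L/K}$ and $\ell/k$ is separable, and a finite separable extension of the separably closed field $k$ is trivial, so $[L:K]=e_{L/K}$ is prime to $p$.

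For \enum2, let $M\sub L$ be the compositum of all intermediate fields $K\sub F\sub L$ with $F/K$ tame. If $F_1,F_2$ are two such, they are cyclic Galois of degree prime to $p$ by \enum1, so $\gal(F_1F_2/K)$ embeds into $\gal(F_1/K)\times\gal(F_2/K)$ and $[F_1F_2:K]$ is again prime to $p$; inductively $M/K$ is tame, hence cyclic Galois of order prime to $p$ by \enum1, and it is the unique maximal tame subextension. Put $G_p\defeq\gal(L/M)$; it is normal in $G$ with $G/G_p=\gal(M/K)$ of order prime to $p$. It remains to see that $G_p$ is a $p$-group: otherwise a $p$-Sylow subgroup $P_0\subsetneq G_p$ would satisfy $[G:P_0]=[G:G_p]\,[G_p:P_0]$ prime to $p$, so $L^{P_0}/K$ would be tame by \enum1, forcing $L^{P_0}\sub M$ and hence $P_0\sup G_p$, a contradiction. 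Thus $G_p$ is a normal subgroup of $p$-power order and prime-to-$p$ index, hence the unique $p$-Sylow subgroup of $G$. The final assertion follows with $L^\tr\defeq M$: the extension $L^\tr/K$ is tame cyclic Galois, $L/L^\tr$ has Galois group $G_p$, and when $G_p\neq 1$ it is wildly ramified, since over the strictly henselian field $L^\tr$ a nontrivial extension of $p$-power degree cannot be tame by \enum1.

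The main obstacle is precisely the explicit Hensel normalization in \enum1 turning an arbitrary uniformizer of $S$ into one whose $n$-th power is a uniformizer of $R$; once that and $\mu_n\sub K$ are in hand, the Eisenstein irreducibility, the Kummer-theoretic identification and the Sylow argument for \enum2 are all formal. I am tacitly using throughout that over the strictly henselian discrete valuation rings occurring here the integral closure in a finite separable extension is a discrete valuation ring with $[L:K]=e\,f$, which is the point where a standing hypothesis such as excellence of $R$ (automatic in the intended applications, where $k$ is algebraically closed) would be invoked.
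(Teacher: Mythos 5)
Your proof is correct. For part \enum 1 you follow essentially the same route as the paper: Hensel's lemma to lift the $n$-th roots of unity into $R$, then exhibiting $L$ as the Kummer extension $K(\pi^{1/n})$ with $\gal(L/K)\iso\mu_n(K)$ cyclic -- except that you actually carry out the step the paper dismisses as ``easily shown'', namely the second Hensel argument extracting an $n$-th root of the unit $u$ in $\pi=u\varpi^n$ so that $L=K[X]\mod(X^n-\pi)$, together with the Eisenstein irreducibility; your closing worry about needing excellence is unnecessary (though harmless), since separability of $L/K$, which you already deduced from $p\nmid n$, makes the integral closure finite over $R$ via the trace form, and that is all the fundamental identity $[L:K]=e_{L/K}\cdot[\ell:k]$ requires. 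For part \enum 2 you take a genuinely different, somewhat longer path: you build the maximal tame subextension $M$ as a compositum of tame subfields, show $\gal(L/M)$ is a $p$-group by a Sylow-index count, and identify it as the unique normal $p$-Sylow subgroup, with $L^\tr=M$. The paper argues more directly: for an \emph{arbitrary} $p$-Sylow subgroup $G_p\sub G$, the fixed field $L^{G_p}$ has degree prime to $p$ over $K$, hence is Galois over $K$ by \enum 1, hence $G_p$ is normal and therefore unique, and $L^\tr=L^{G_p}$. What your version buys is that the maximality of $L^\tr$ among tame subextensions and the wild ramification of $L/L^\tr$ (via \enum 1 applied over the again strictly henselian ring below $L^\tr$) are made explicit, points the paper leaves implicit; what it costs is the extra compositum argument, which the paper's one-line normality argument avoids.
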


\begin{proof}
Since $k$ is separably closed, every finite extension  $\ell/k$ is purely inseparable and, hence,
its degree is a power of  $p$. Due to the fundamental equation $[L:K]=e_{L/K}\cdot[\ell/k]$ the
first part of \enum 1 is obvious.
\par
Since $R$ is henselian, we can lift the roots of the polynomial $X^n-1$ from $k$ to $R$. Thus, we
can conclude that the $n$-th roots of unity are contained in $R$ and hence in $K$. Any extension
$L/K$ of degree $n$ prime to $p$ can easily be shown to be a Kummer extension\index{Kummer
extension}, isomorphic to
\[
L\iso K[X]\mod (X^n-\pi)
\]
for a suitable uniformising element $\pi$ of $K$. Now, $\sigma\mapsto \frac {\sigma(X)} X$
constitutes an isomorphism $\gal(L/K)\isoto\mu_n(K)$.  Since the $n$-th roots of unity are
contained in $R$ and thus in $K$, the group $\mu_n(K)$ is (non canonically) isomorphic to $\bbZ\mod
n$. This settles assertion \enum 1. To show \enum 2, let $G_p$ be any $p$-Sylow subgroup of $G$. The
degree of the corresponding field extension $L^{G_p}/K$ is prime to $p$. Hence it is a Galois
extension by \enum 1, which implies that $G_p$ is normal. 
\end{proof}

We set $K_s^\tr$ for the union of all tamely ramified extensions of $K$ in $K_s$. This field is
tamely ramified over $K$ and the above proposition can be generalized to the situation of profinite Galois
groups as follows:

\begin{cor}
Let $R$ be as above. Then there exists an exact sequence
\[
0\longto P\longto\gal(K_s/K)\longto\prod_{\ell\neq p}\hat\bbZ_\ell\longto 0
\]
with a pro-$p$-group $P=\gal(K_s/K_s^\tr)$. The last term is isomorphic to the Galois group $\gal(K_s^\tr/K)$.
\qed
\end{cor}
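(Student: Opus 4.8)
The plan is to make the field $K_s^\tr$ completely explicit, read off $\gal(K_s^\tr/K)$ as an inverse limit of cyclic groups of order prime to $p$, and then identify the kernel $P=\gal(K_s/K_s^\tr)$ of the restriction map $\gal(K_s/K)\longto\gal(K_s^\tr/K)$ as an inverse limit of finite $p$-groups; once both identifications are in place the exact sequence is purely formal. First I would fix a uniformiser $\pi$ of $R$ and set $K_n\defeq K[X]\mod(X^n-\pi)$ for every $n$ with $p\nmid n$; by the preceding proposition this is a cyclic, tamely ramified extension of $K$ of degree $n$. Conversely, any finite tamely ramified extension $L/K$ is cyclic of some degree $n$ prime to $p$, and since every unit of $R$ is an $n$-th power (Hensel's lemma, using $p\nmid n$ and that $k$ is algebraically closed) Kummer theory forces $L=K_n$ as a subfield of $K_s$, independently of the choice of $\pi$. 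Choosing the radicals $\pi^{1/n}$ compatibly, so that $(\pi^{1/mn})^m=\pi^{1/m}$, one gets $K_m\sub K_n$ whenever $m\mid n$, the family $(K_n)$ is cofinal among the finite tame subextensions of $K_s/K$, and $K_s^\tr=\bigcup_{p\nmid n}K_n$; in particular $K_s^\tr/K$ is Galois.

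Next, $\sigma\mapsto\sigma(\pi^{1/n})/\pi^{1/n}$ defines compatible isomorphisms $\gal(K_n/K)\isoto\mu_n(K)$, and since $R$ is henselian the $n$-th roots of unity lift from $k$, so $\mu_n(K)\iso\bbZ\mod n$ compatibly in $n$. Passing to the inverse limit over all $n$ prime to $p$ therefore gives
\[
\gal(K_s^\tr/K)\iso\projlim_{p\nmid n}\bbZ\mod n\iso\prod_{\ell\neq p}\hat\bbZ_\ell,
\]
the last step being the Chinese remainder theorem; this is the right-hand term of the asserted sequence. By definition $P=\gal(K_s/K_s^\tr)$ is the kernel of the surjection $\gal(K_s/K)\longto\gal(K_s^\tr/K)$, so the sequence
\[
0\longto P\longto\gal(K_s/K)\longto\gal(K_s^\tr/K)\longto 0
\]
is exact.

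Finally I would check that $P$ is a pro-$p$-group. Writing $\gal(K_s/K)=\projlim_L\gal(L/K)$ over the finite Galois extensions $L/K$ inside $K_s$, the closed subgroup $P$ equals $\projlim_L P_L$ with $P_L$ the image of $P$ in $\gal(L/K)$. Since $LK_s^\tr/K_s^\tr$ is Galois with group canonically $\gal(L/L\cap K_s^\tr)$, restriction identifies $P_L$ with $\gal(L/L\cap K_s^\tr)$; and $L\cap K_s^\tr$ is the maximal tamely ramified subextension $L^\tr$ of $L/K$, because it is tame over $K$ — hence has degree prime to $p$, forcing $|G_p|\leq[L:L\cap K_s^\tr]$ — while $L^\tr\sub L\cap K_s^\tr$ gives $\gal(L/L\cap K_s^\tr)\sub\gal(L/L^\tr)=G_p$ by the preceding proposition. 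Thus $P_L=G_p$ is a finite $p$-group and $P=\projlim_L G_p$ is pro-$p$. The step that requires the most care is exactly this last one: pinning down the image of $P$ in each finite Galois layer as the $p$-Sylow subgroup furnished by the previous proposition, via the identification $L\cap K_s^\tr=L^\tr$; the rest — the Kummer-theoretic normal form for tame extensions and the bookkeeping of inverse limits — is routine given what has already been proved.
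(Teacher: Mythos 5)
Your proof is correct and takes exactly the route the paper intends: the corollary is stated without proof as the profinite-limit version of the preceding proposition, and you carry that out in full — the Kummer normal form $K_n=K(\pi^{1/n})$ for the tame extensions, $\gal(K_s^\tr/K)\iso\projlim_{p\nmid n}\bbZ\mod n\iso\prod_{\ell\neq p}\hat\bbZ_\ell$, and the pro-$p$ property of $P$ via its images $G_p$ in the finite Galois layers. Note only that your key identification $L\cap K_s^\tr=L^\tr$ is precisely the content of Proposition \ref{gal_Ltr}, which the paper states and proves right after this corollary, so that step is consistent with (indeed duplicates) the paper's own argument.
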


Let $L/K$ be a finite Galois extension with Galois group $G'$. We have seen that it gives rise to
normal field extensions $K\sub L^\tr\sub L$. We now want to describe the Galois extension
$K_s/L^\tr$ and its Galois group:

\begin{prop}\label{gal_Ltr}
In this situation, the field $L^\tr$ can be written as $L\cap K_s^\tr$. Thus we have $\gal(K_s/L^\tr)=G'\cdot P$ for the
pro-$p$-group $P=\gal(K_s/K_s^\tr)$.
\end{prop}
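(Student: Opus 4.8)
The plan is to prove the two inclusions $L^\tr\sub L\cap K_s^\tr$ and $L\cap K_s^\tr\sub L^\tr$ separately, and then to deduce the statement about Galois groups from the Galois correspondence for the infinite Galois extension $K_s/K$. Throughout I write $\bar G\defeq\gal(L/K)$, which equals $G/G'$ since $G'=\gal(K_s/L)$ is normal in $G$, and I recall from the preceding proposition that $\bar G$ has a unique $p$-Sylow subgroup $\bar G_p$, that $L^\tr=L^{\bar G_p}$, and that $L^\tr/K$ is tamely ramified.

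The inclusion $L^\tr\sub L\cap K_s^\tr$ is the easy one: $L^\tr\sub L$ holds by construction, and since $L^\tr/K$ is tamely ramified and $K_s^\tr$ is by definition the union of all tamely ramified subextensions of $K_s/K$, we also have $L^\tr\sub K_s^\tr$. For the reverse inclusion I would set $M\defeq L\cap K_s^\tr$ and first observe that $[M:K]$ is prime to $p$: the field $M/K$ is a finite subextension of $K_s^\tr/K$, and by the corollary above $\gal(K_s^\tr/K)\iso\prod_{\ell\neq p}\hat\bbZ_\ell$, all of whose finite quotients have order prime to $p$ (alternatively one may argue directly from tameness via the fundamental equation $[M:K]=e_{M/K}\cdot[\ell'/k]$ for the residue field $\ell'$ of $M$). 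Hence $H\defeq\gal(L/M)\sub\bar G$ has index $[M:K]$ prime to $p$, so the $p$-part of $|H|$ is the full $p$-part of $|\bar G|$ and $H$ contains a $p$-Sylow subgroup of $\bar G$; by uniqueness this is $\bar G_p$, so $M=L^H\sub L^{\bar G_p}=L^\tr$. Together with the first inclusion this gives $L^\tr=L\cap K_s^\tr$.

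For the assertion on Galois groups, note that $K_s^\tr/K$ is Galois, so $P=\gal(K_s/K_s^\tr)$ is a normal subgroup of $G$; consequently $G'\cdot P$ is a subgroup of $G$, and it is closed, being the image of the compact set $G'\times P$ under multiplication. Its fixed field is $K_s^{G'}\cap K_s^{P}=L\cap K_s^\tr=L^\tr$, so the Galois correspondence between closed subgroups of $G$ and intermediate fields of $K_s/K$ identifies $G'\cdot P$ with $\gal(K_s/L^\tr)$, as claimed.

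I do not expect a serious obstacle here; the two points that need a little care are the claim that a finite subextension of $K_s^\tr/K$ has degree prime to $p$ (which is where the structure of $\gal(K_s^\tr/K)$, or tameness, enters) and the verification that $G'\cdot P$ is a closed subgroup so that the profinite Galois correspondence applies. The Sylow-theoretic step is routine once the uniqueness of $\bar G_p$ from the preceding proposition is invoked.
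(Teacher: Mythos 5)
Your proof is correct and follows essentially the same route as the paper: identify $L^\tr=L\cap K_s^\tr$ as the maximal tamely ramified subextension of $L/K$ and then read off $\gal(K_s/L^\tr)=G'\cdot P$ from the Galois correspondence. The paper's own proof is a two-sentence compression of exactly this; your Sylow-uniqueness argument for the reverse inclusion and the verification that $G'\cdot P$ is a closed subgroup are just the details it leaves implicit.
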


\begin{proof}
Since $K_s^\tr$ is the union of all tamely ramified extensions of $K$ in $K_s$, it is a tamely ramified extension of $K$.
Thus $K_s^\tr\cap L$ is the maximal tamely ramified extension of $K$ in $L$.
\end{proof}


\par
In the light of this proposition, we are to study the action of the group $P$ on the Tate module.
Since the Galois group acts on each of the groups $A_{K,\ell^n}(K_s)=(\bbZ/\ell^n)^{2g}$, it
suffices to study the action of $G$ and $P$ on these groups to understand the action on the entire
Tate module.
\par
An action of $G$ on these groups can be regarded as a homomorphism
\[
G\longto\aut((\bbZ/\ell^n)^{2g}).
\]
In a first step, we investigate the order of this automorphism group.

\begin{lemma}\label{autord}
Let $\ell$ be a prime and let $Z=\bbZ\mod\ell^{n_1}\times\ldots\times\bbZ\mod\ell^{n_r}$.
\begin{enumprop}
  \item $\ord\aut Z= \ell^\eta\cdot \prod_{i=1}^r(\ell^{d_i}-\ell^{i-1})$ for some
$d_i\in\set{i,\ldots,r}$ and $\eta\in\bbN$.
  \item $\ord\aut((\bbZ\mod\ell^n)^r)=\ell^{\eta}\cdot\prod_{i=1}^r (\ell^i-1)$ for some integer
$\eta\in\bbN$.
  \item If $H\sub(\bbZ\mod\ell^n)^r$ is a subgroup, then each prime divisor of $\ord\aut H$ is a
prime divisor of $\ord\aut((\bbZ\mod\ell^n)^r)$.
\end{enumprop}
\end{lemma}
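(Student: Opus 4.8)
The plan is to prove (i) by counting the automorphisms of $Z$ directly, and then to read off (ii) and (iii) from it. Throughout I fix the prime $\ell$; every group in sight is finite, and the assertions only involve $\ell$ and prime divisors of the numbers $\ell^i-1$. For (i) I would write $Z=\bigoplus_{m=1}^r\bbZ/\ell^{n_m}$ with canonical generators $g_m$, ordered so that $n_1\ge n_2\ge\cdots\ge n_r$. An endomorphism $\phi$ of $Z$ is the same datum as a tuple $(v_1,\dots,v_r)$ with $v_m\defeq\phi(g_m)$ in the $\ell^{n_m}$-torsion subgroup $Z[\ell^{n_m}]$, and $\phi$ is an automorphism exactly when the reductions $\bar v_1,\dots,\bar v_r$ form an $\bbF_\ell$-basis of $Z/\ell Z\cong\bbF_\ell^r$ --- for a homomorphism of finite $\ell$-groups that is onto modulo $\ell$ is onto (iterate $Z=\im\phi+\ell Z$), hence bijective. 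Thus $\ord\aut Z$ is the number of such tuples.

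I would count these by choosing $v_r,v_{r-1},\dots,v_1$ in that order. Write $\bar U_m$ for the image of $Z[\ell^{n_m}]$ in $Z/\ell Z$; one checks that it is the coordinate subspace spanned by those $\bar g_{m'}$ with $n_{m'}\le n_m$, so that, putting $d'_m\defeq\#\{m':n_{m'}\le n_m\}=\dim_{\bbF_\ell}\bar U_m$, one has $r-m+1\le d'_m\le r$, and that $Z[\ell^{n_m}]\to\bar U_m$ is onto with kernel of order $\ell^{b_m}$ for some $b_m\ge 0$. The decisive point --- and the step on which everything hinges --- is that the chosen ordering forces $\bar U_{m'}\subseteq\bar U_m$ for all $m'>m$ (because $n_{m'}\le n_m$). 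Hence, once $v_r,\dots,v_{m+1}$ have been chosen with independent reductions, the span $S=\langle\bar v_{m+1},\dots,\bar v_r\rangle$ is an $(r-m)$-dimensional subspace already contained in $\bar U_m$, and so the number of admissible $v_m$ --- those in $Z[\ell^{n_m}]$ with $\bar v_m\notin S$ --- does not depend on the earlier choices: it equals $\ell^{b_m}(\ell^{d'_m}-\ell^{r-m})$. Multiplying over $m$ and re-indexing by $i=r-m+1$, with $d_i\defeq d'_{r-i+1}\in\{i,\dots,r\}$ (since $r-m+1\le d'_m\le r$), this becomes
\[
\ord\aut Z=\ell^{\sum_m b_m}\prod_{i=1}^r(\ell^{d_i}-\ell^{i-1}),
\]
which is assertion (i), with $\eta=\sum_m b_m\in\bbN$.

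For (ii) I would note that $\aut((\bbZ/\ell^n)^r)=\mathrm{GL}_r(\bbZ/\ell^n)$ has order $\ell^{(n-1)r^2}\cdot\ord\mathrm{GL}_r(\bbF_\ell)=\ell^{(n-1)r^2}\prod_{i=1}^r(\ell^r-\ell^{i-1})$, which, since $\prod_{i=1}^r(\ell^r-\ell^{i-1})=\ell^{\binom{r}{2}}\prod_{i=1}^r(\ell^i-1)$, has the asserted shape (it is also the case $n_1=\cdots=n_r$ of (i)). For (iii): a subgroup $H\sub(\bbZ/\ell^n)^r$ is a finite abelian $\ell$-group whose rank $r'=\dim_{\bbF_\ell}H[\ell]$ is $\le\dim_{\bbF_\ell}\bigl((\bbZ/\ell^n)^r[\ell]\bigr)=r$, so by (i) $\ord\aut H=\ell^\eta\prod_{i=1}^{r'}(\ell^{d_i}-\ell^{i-1})$ with $d_i\in\{i,\dots,r'\}$; writing $\ell^{d_i}-\ell^{i-1}=\ell^{i-1}(\ell^{d_i-i+1}-1)$ with $1\le d_i-i+1\le r'\le r$, every prime dividing $\ord\aut H$ is $\ell$ or divides $\ell^m-1$ for some $m\le r$, hence divides $\ord\aut((\bbZ/\ell^n)^r)=\ell^{(n-1)r^2+\binom{r}{2}}\prod_{i=1}^r(\ell^i-1)$ by (ii). Apart from the bookkeeping emphasized above, the remaining verifications --- the description of $\bar U_m$, the order $\ell^{(n-1)r^2}$ of the first congruence subgroup of $\mathrm{GL}_r(\bbZ/\ell^n)$, and the rank bound $\dim_{\bbF_\ell}H[\ell]\le r$ --- are routine.
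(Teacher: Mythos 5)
Your argument is correct, but it takes a more self-contained route than the paper. For parts \enum 1 and \enum 2 the paper gives no argument at all: it quotes the order formula for the automorphism group of a finite abelian $\ell$-group from Ranum (Theorem 15) resp.\ Hillar--Rhea (Theorem 4.1), and the only step it actually proves is the deduction of \enum 3, namely writing each factor $\ell^{d_i}-\ell^{i-1}$ of $\ord\aut H$ as $\ell^{i-1}(\ell^{d_i-i+1}-1)$ with $d_i-i+1\le r$, after noting $H\iso\prod_{i=1}^s\bbZ/\ell^{n_i}$ with $s\le r$. Your \enum 3 is exactly this rewriting, with the rank bound obtained via $\dim_{\bbF_\ell}H[\ell]\le r$ instead of the explicit decomposition of $H$ --- same substance. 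What you add is a proof of the counting formula \enum 1: identifying an endomorphism with the tuple of images of the canonical generators, characterizing automorphisms by invertibility modulo $\ell$ (valid by the iteration $Z=\im\phi+\ell Z$ plus finiteness), and ordering the exponents decreasingly so that the span of the previously chosen reductions is automatically contained in $\bar U_m$; this is indeed the decisive point, it makes the count $\ell^{b_m}(\ell^{d'_m}-\ell^{r-m})$ at each stage independent of earlier choices, and the re-indexing gives precisely the shape asserted in \enum 1, with \enum 2 following as the equal-exponent case or via $\mathrm{GL}_r(\bbZ/\ell^n)$. In effect you reprove the cited reference rather than change how \enum 3 is extracted from it: the gain is a lemma independent of the literature, the cost is length. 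One point left implicit in your \enum 3: for the prime $\ell$ itself one needs $\ell\mid\ord\aut((\bbZ/\ell^n)^r)$, which can only fail when $n=r=1$, in which case $\ord\aut H$ divides $\ell-1$ and no issue arises; this is harmless (the paper is no more explicit), and irrelevant for the application, where only primes different from $\ell$ matter.
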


\begin{proof}
The order of the automorphism group of a finite abelian $\ell$-group is computed in \cite{ra},
Theorem 15. For a modern account, see \cite{hr}, Theorem 4.1.
\par
If $H$ is a subgroup as in \enum 3, then it is isomorphic to
\[
H\iso\prod_{i=1}^s\bbZ\mod\ell^{n_i}\quad\text{for some $s\leq r$ and $n_i\leq n$},
\]
and we can write each factor $\ell^{d_i}-\ell^{i-1}$ of $\ord\aut H$ as
$\ell^{i-1}(\ell^{d_i-i+1}-1)$ for some $d_i\in\set{i,\ldots,s}$. Now assertion \enum 3 follows.
\end{proof}

The existence of elements of order $p$ in a finite group $G$ is equivalent to $p\mid\ord G$.
Therefore, it is sufficient to show that the product of factors $\ell^i-1$ is not divisible by $p$
in order to prove that the automorphism group in mind has no elements of order
$p$. Furthermore, if $\ord\aut G$ is not divisible by $p$, the same is true for the order of the
automorphism group of every subgroup of $G$.
\par
Let $\bbP$ denote the set of primes, $\bbP=\set{2,3,5,\ldots}$. Then we can formulate Dirichlet's
prime number theorem\index{Dirichlet's prime number theorem} as follows:

\begin{lemma}\label{dpnt}
Let $p$ be a prime. Then the canonical map $\bbP\setminus\set p\to(\bbZ\mod p)\units$ is
surjective.\qed
\end{lemma}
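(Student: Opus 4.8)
The plan is simply to unwind the statement: Lemma \ref{dpnt} is a convenient reformulation of Dirichlet's theorem on primes in arithmetic progressions, which says that for any integer $a$ coprime to a modulus $m$ the progression $\set{a+nm : n\in\bbN}$ contains infinitely many primes. So the whole argument amounts to invoking that classical result and checking that the prime it produces lies in the right domain.

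First I would fix an element $\bar a\in(\bbZ\mod p)\units$ and choose a representative $a\in\bbZ$ with $0<a<p$, so that $\gcd(a,p)=1$. Dirichlet's theorem then guarantees that the progression $a,\ a+p,\ a+2p,\ldots$ contains infinitely many primes, and I would pick one such prime $q$. Since $q\equiv a\pmod p$ and $a\not\equiv 0\pmod p$, one has $q\neq p$, hence $q\in\bbP\setminus\set p$; and the image of $q$ under the canonical (reduction mod $p$) map $\bbP\setminus\set p\to(\bbZ\mod p)\units$ is precisely $\bar a$. As $\bar a$ was arbitrary, surjectivity follows.

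There is essentially nothing to overcome here, since the content is carried entirely by the cited theorem; the only point needing a word is why the prime may be taken different from $p$ itself. This is automatic, because $p$ reduces to $0$, which is not a unit, whereas $q$ reduces to the unit $\bar a$; alternatively one may simply discard the at most one term of the progression equal to $p$, which is harmless as Dirichlet provides infinitely many primes. Hence the main (indeed only) ingredient is Dirichlet's theorem, and no further estimates or constructions are required.
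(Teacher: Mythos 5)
Your proof is correct and is exactly what the paper intends: the lemma is stated (with proof omitted) as a reformulation of Dirichlet's theorem on primes in arithmetic progressions, which is precisely the result you invoke. The small check that the prime produced can be taken different from $p$ is handled correctly, so nothing is missing.
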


We are now ready to prove the following reduction theorem:

\begin{thm}\label{smalldim_ssred}\index{tamely ramified extension}\index{semistable reduction}
Let $R$ be a strictly henselian discrete valuation ring with residue class field of characteristic
$p\neq 0$. Let $K$ be the field of fractions of $R$, and let $A_K$ be an abelian variety over $K$
of dimension $g$. If $2g+3\leq p$, then $A_K$ obtains semistable reduction over a tamely ramified
extension of $K$.
\end{thm}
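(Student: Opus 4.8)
The plan is to combine the Galois criterion for semistable reduction (Proposition~\ref{galois_crit}, or rather its group-theoretic reformulation Proposition~\ref{ssred}) with the structural description of the absolute Galois group of $K$ (the corollary giving $0\to P\to G\to\prod_{\ell\neq p}\hat\bbZ_\ell\to 0$ with $P$ pro-$p$) and the order computation in Lemma~\ref{autord}. By Proposition~\ref{ssred} there is a finite Galois extension $L/K$, corresponding to a normal open subgroup $G'=\gal(K_s/L)\sub G$, with a submodule $T'\sub T=T_\ell(A_K)$ stable under $G'$ on which $G'$ acts trivially, as it does on $T/T'$. The goal is to replace $L$ by a \emph{tamely} ramified extension with the same property; equivalently, to find an open normal subgroup $G''\supseteq G'$ with $G\mod G''$ of order prime to $p$ and with $G''$ still acting trivially on $T'$ and $T/T'$. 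By Proposition~\ref{gal_Ltr}, the natural candidate is $G''\defeq G'\cdot P$, corresponding to $L^\tr=L\cap K_s^\tr$, which is tamely ramified over $K$. So the whole problem reduces to showing: \emph{the pro-$p$-group $P$ acts trivially on $T'$ and on $T/T'$}, i.e.\ $P\sub G''$ acts trivially, under the hypothesis $2g+3\le p$.

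Here is how I would carry that out. Fix a prime $\ell\neq p$ and work at finite level: for each $n$, the action of $G$ on $A_{K,\ell^n}(K_s)\iso(\bbZ\mod\ell^n)^{2g}$ is a homomorphism $G\to\aut((\bbZ\mod\ell^n)^{2g})$. Restricting to $P$ and to the relevant subquotients $T'\mod\ell^n$ and $(T/T')\mod\ell^n$, which are subgroups/quotients of $(\bbZ\mod\ell^n)^{2g}$, I get homomorphisms from $P$ into $\aut$ of these $\ell$-groups. Since $P$ is pro-$p$, the image of each such homomorphism is a finite $p$-group, so it is nontrivial only if $p$ divides the order of the automorphism group in question. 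By Lemma~\ref{autord}(ii)--(iii), every prime divisor of $\ord\aut H$ for $H$ a subquotient of $(\bbZ\mod\ell^n)^{2g}$ is a prime divisor of $\ord\aut((\bbZ\mod\ell^n)^{2g})=\ell^{\eta}\prod_{i=1}^{2g}(\ell^i-1)$. So it suffices to arrange that $p\nmid\ell^{\eta}\prod_{i=1}^{2g}(\ell^i-1)$. Since $\ell\neq p$, the factor $\ell^\eta$ is harmless, and we only need $p\nmid\prod_{i=1}^{2g}(\ell^i-1)$, i.e.\ that the multiplicative order of $\ell$ modulo $p$ exceeds $2g$.

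The remaining point is to \emph{choose} the auxiliary prime $\ell$ wisely: I want $\ell\neq p$ whose image in $(\bbZ\mod p)\units$ is a generator, so that $\ell$ has order $p-1$ modulo $p$; then $p\mid\ell^i-1$ forces $(p-1)\mid i$, and since $1\le i\le 2g$ and $p-1\ge 2g+2>2g$, this never happens, so $p\nmid\prod_{i=1}^{2g}(\ell^i-1)$. Such an $\ell$ exists by Lemma~\ref{dpnt} (Dirichlet): the map $\bbP\setminus\set p\to(\bbZ\mod p)\units$ is surjective, so some prime $\ell\neq p$ hits a chosen generator of the cyclic group $(\bbZ\mod p)\units$. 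With this $\ell$ fixed, the argument above shows $P$ acts trivially on $T'\mod\ell^n$ and $(T/T')\mod\ell^n$ for all $n$, hence on $T'$ and $T/T'$ by passing to the inverse limit. Therefore $G''=G'\cdot P=\gal(K_s/L^\tr)$ also acts trivially on $T'$ and $T/T'$; by the Galois criterion (Proposition~\ref{galois_crit}, applied over the strictly henselian ring $S$ attached to $L^\tr$, whose inertia group is all of $G''$), $A_K$ acquires semistable reduction over $L^\tr$, which is tamely ramified over $K$. The one step that needs care — the ``main obstacle'' — is bookkeeping the exponent bound: one must confirm that $\ell^\eta$ contributes no factor of $p$ (immediate from $\ell\neq p$) and that the hypothesis $2g+3\le p$, i.e.\ $p-1\ge 2g+2$, is exactly what is needed to keep $(p-1)\nmid i$ for all $i\le 2g$; and one should double-check that $T'$ really is a \emph{direct summand}-like subgroup so that Lemma~\ref{autord}(iii) applies to $T'\mod\ell^n$ as well as to the quotient.
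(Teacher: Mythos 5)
Your proposal is correct and takes essentially the same route as the paper: use Proposition~\ref{ssred} and Proposition~\ref{gal_Ltr} to reduce to showing the pro-$p$-group $P$ acts trivially, pick $\ell$ a primitive root modulo $p$ via Lemma~\ref{dpnt}, and invoke Lemma~\ref{autord} together with $2g<p-1$ to rule out elements of order $p$ in the automorphism groups, so that $\gal(K_s/L^\tr)=G'\cdot P$ still satisfies the Galois criterion. Your closing worry about applying Lemma~\ref{autord}(iii) to the quotient $T/T'$ is harmless: as in the paper, one shows $P$ acts trivially on all of $(\bbZ/\ell^n)^{2g}$, and triviality on the subquotients $T'$ and $T/T'$ then follows automatically.
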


\begin{proof}
We are going to show that with these assumptions there is no non-trivial $P$-action on
$A_{K,\ell^n}(K_s)=(\bbZ\mod\ell^n)^{2g}$ for a suitable prime $\ell$ and every pro-$p$-group $P$.
We  do this by investigating the order of the corresponding automorphism groups. Following
lemma \ref{autord}, neither $\aut((\bbZ\mod\ell^n)^{2g})$ nor $\aut H$ for any subgroup
$H\sub(\bbZ\mod\ell^n)^{2g}$ do have elements of order $p$, if $p\nmid \ell^i-1$ for all
$i\in \set{1,\ldots, 2g}$. Without loss of generality, we can restrict ourselves to the group
$(\bbZ\mod\ell^n)^{2g}$. The last condition can be formulated as
\[
p\nmid \ord\aut((\bbZ\mod\ell^n)^{2g})\Iff \ell^i\not\equiv 1 \modulo p \quad\text{for all
$i\in\set{1,\dots,2g}$}.
\]
The group $(\bbZ\mod p)\units$ is  cyclic and of order $p-1$. With lemma \ref{dpnt}, we can choose a
prime $\ell$ which generates $(\bbZ\mod p)\units$. Then $p-1$ is the minimal exponent with the
property that
\[
\ell^{p-1}\equiv 1\modulo p.
\]
Therefore, it is minimal with $p\mid \ell^{p-1}-1$. Consequently, if
\[
2g<p-1,
\]
then $\aut((\bbZ\mod\ell^n)^{2g})$ does not have any elements of order $p$. As this inequality 
cannot be true for $p=2$, the above inequality leads to $2g+3\leq p$.
\par
With the semistable reduction theorem, \ref{ssred}, and the Galois criterion, \ref{galois_crit}, we
choose a finite Galois extension $L/K$ with Galois group $G'\defeq\gal(K_s/L)\sub
G\defeq\gal(K_s/K)$ and some $G'$-submodule $T'\sub T\defeq T_\ell(A_K)$ such that $G'$ acts
trivially on both $T'$ and $T\mod T'$.
\par
Now, we consider the tamely ramified extension $L^\tr/K$. We have seen in proposition \ref{gal_Ltr}
that the corresponding Galois group $\gal(K_s/L^\tr)$ is of the form $G'\cdot P$ for some
pro-$p$-group $P\sub G$. Now the computation (the limit varies over all open subgroups $P'\sub P$)
\[
\HH^0(P,(\bbZ\mod\ell^n)^{2g})=\indlim\HH^0(P\mod
P',((\bbZ\mod\ell^n)^{2g})^{P'})=\indlim(((\bbZ\mod\ell^n)^{2g})^{P'})=(\bbZ\mod\ell^n)^{2g}
\]
shows that every pro-$p$-group $P$ acts trivially on $T_\ell(A_K)$ (and, by lemma \ref{autord}, on
all its subgroups). Thus the product $G'\cdot P$ acts trivially on $T'$ and $T\mod T'$. Therefore,
$A_{L^\tr}$ has semistable reduction.
\end{proof}

We can infer numerous corollaries from this theorem. Amazingly, the threshold $2g+3\leq p$ for the
dimension $g=\dim A_K$ has many consequences for the groups of components and for the canonical
morphism $\oldphi_{A_K}\to\oldphi_{A_L}$. With the methods of this proof we can show:

\begin{prop}\label{prime_gal}
Let $A_K$ be a variety of dimension $g$ with $2g+3\leq p$. Let $L/K$ be a minimal Galois extension
with the property that $A_L$ reaches semistable reduction, then every prime divisor $q$ of $[L:K]$
is smaller than $2g+3$.
\end{prop}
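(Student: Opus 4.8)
\emph{Plan.} The plan is to reduce the statement to the tamely ramified case and then run, on the Tate module, an eigenvalue argument parallel to the proof of Theorem~\ref{smalldim_ssred}; throughout, write $G\defeq\gal(K_s/K)$, $G'\defeq\gal(K_s/L)$ and $\rho_\ell\colon G\to\aut(T_\ell(A_K))$ for the $\ell$-adic representation.

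First I would show that a minimal $L$ is automatically tamely ramified over $K$, by re-running the argument of Theorem~\ref{smalldim_ssred}. Given that $A_L$ has semistable reduction, choose a prime $\ell\neq p$ generating $(\bbZ\mod p)\units$; this is possible by Lemma~\ref{dpnt}, and $2g<p-1$ (which follows from $2g+3\le p$) guarantees $\ell^i\not\equiv1\bmod p$ for $1\le i\le 2g$, so by Lemma~\ref{autord} the order of $\aut((\bbZ\mod\ell^m)^{2g})$ is prime to $p$. Hence the pro-$p$-group $P=\gal(K_s/K_s^\tr)$ acts trivially on every $A_{K,\ell^m}(K_s)$, so on $T_\ell(A_K)$ and all its subquotients. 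Combining a $G'$-stable $T'\subseteq T_\ell(A_K)$ with trivial action on $T'$ and $T_\ell(A_K)/T'$ (Proposition~\ref{galois_crit}) with $\gal(K_s/L^\tr)=G'\cdot P$ (Proposition~\ref{gal_Ltr}), the group $\gal(K_s/L^\tr)$ still acts trivially on $T'$ and on $T_\ell(A_K)/T'$, so $A_{L^\tr}$ is semistable. Minimality of $L$ forces $L=L^\tr$; in particular $n\defeq[L:K]$ is prime to $p$ and $\gal(L/K)$ is cyclic.

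Now suppose, for contradiction, that a prime $q$ divides $n$ with $q\ge 2g+3$ (so $q\neq p$). As $\gal(L/K)$ is cyclic there is a subextension $K\subseteq M\subseteq L$ with $\gal(L/M)\iso\bbZ\mod q$ and $M/K$ Galois of degree $n/q<n$ (with $M=K$ if $n=q$). By minimality of $L$ — or, when $M=K$, simply because semistable reduction of $A_K$ would force $L=K$ — the variety $A_M$ is not semistable. Since the integral closure of $R$ in $M$ is again strictly henselian, $H\defeq\gal(K_s/M)$ is the inertia group of $M$; by Grothendieck's semistable reduction criterion (the content behind Proposition~\ref{galois_crit}, cf.\ \cite{sga7}, IX, which also records that unipotent inertia acts on $T_\ell$ with $(\sigma-1)^2=0$), $H$ therefore does not act unipotently on $T_\ell\defeq T_\ell(A_K)$ for any $\ell\neq p$. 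Fix any prime $\ell\notin\{p,q\}$ and pick $h\in H$ with $\rho_\ell(h)$ not unipotent. By Proposition~\ref{ssred} every $\rho_\ell(g)$ has eigenvalues that are roots of unity, so the semisimple part $s$ of $\rho_\ell(h)$ has finite order. The image of $h$ in $H\mod G'\iso\bbZ\mod q$ is nontrivial — otherwise $h\in G'$, and $G'$ acts with $(\sigma-1)^2=0$ since $A_L$ is semistable, forcing $\rho_\ell(h)$ unipotent. Hence $h^q\in G'$, so $\rho_\ell(h)^q=\rho_\ell(h^q)$ is unipotent and $s^q=1$; as $q$ is prime and $s\neq1$, $s$ has order $q$ and a primitive $q$-th root of unity $\zeta$ among its eigenvalues. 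Comparing characteristic polynomials on $T_\ell\tensor\bbQ_\ell$, the minimal polynomial of $\zeta$ over $\bbQ_\ell$ — of degree $[\bbQ_\ell(\zeta):\bbQ_\ell]=\ord_q(\ell)$, since $\ell\neq q$ — divides the degree-$2g$ polynomial $\det(X-\rho_\ell(h))$, so $\ord_q(\ell)\le 2g$.

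This holds for every prime $\ell\notin\{p,q\}$. But $(\bbZ\mod q)\units$ is cyclic of order $q-1\ge 2g+2$, so by Lemma~\ref{dpnt} (applied with $q$ in place of $p$) some prime $\ell\notin\{p,q\}$ maps to a generator, giving $\ord_q(\ell)=q-1>2g$ — a contradiction. Hence every prime divisor of $[L:K]$ is smaller than $2g+3$. The one delicate point I expect is the use, in the previous paragraph, of the equivalence ``$A_M$ semistable $\iff$ inertia of $M$ acts unipotently on $T_\ell$'' in the nonobvious direction, which rests on the fact that for abelian varieties unipotent monodromy automatically has length $\le 2$; everything else is the reduction to the cyclic (tame) case together with the bookkeeping on orders of cyclotomic extensions already used in Theorem~\ref{smalldim_ssred}.
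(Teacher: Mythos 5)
Your proof is correct, but it takes a genuinely different route from the paper's in the main step. You and the paper agree on the first reduction: re-running Theorem~\ref{smalldim_ssred} shows the minimal $L$ is tamely ramified, so $\gal(L/K)$ is cyclic of order $n$ prime to $p$. For a prime $q\geq 2g+3$ dividing $n$, the paper does \emph{not} argue by eigenvalues: it passes to the subfield $L'$ fixed by the $q$-part of $\bbZ\mod n$, writes $\gal(K_s/L')=\gal(K_s/L)\cdot Q$ with $Q$ a pro-$q$-group, chooses $\ell$ (Dirichlet plus Lemma~\ref{autord}) so that no pro-$q$-group can act nontrivially on $T_\ell$ or its subgroups, and then concludes exactly as in the theorem that $A_{L'}$ is already semistable, contradicting minimality; only the filtration form of the Galois criterion and the automorphism-order lemma are used. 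You instead descend to the subfield $M$ with $[L:M]=q$, use minimality to get $A_M$ non-semistable, and extract a non-unipotent $h\in\gal(K_s/M)$ with $h^q\in\gal(K_s/L)$, so that a primitive $q$-th root of unity occurs among the eigenvalues and $\ord_q(\ell)=[\bbQ_\ell(\zeta_q):\bbQ_\ell]\leq 2g$ contradicts the Dirichlet choice of $\ell$. Your route is a classical monodromy argument (Jordan decomposition, cyclotomic degree bounds) and avoids Lemma~\ref{autord} and the pro-$q$-Sylow bookkeeping in the second half; its cost is exactly the point you flag, namely the unipotence form of the criterion (``all of inertia unipotent $\then$ semistable'', via the automatic echelon-$2$ property), which is indeed in \cite{sga7}, IX, but is strictly more than the filtration statement quoted as Proposition~\ref{galois_crit}, whereas the paper's argument never needs it. One shared small point: both arguments need $\ell\nin\set{p,q}$, which requires the (standard) refinement of Lemma~\ref{dpnt} that each residue class contains infinitely many primes; the paper glosses over this in the same way you do.
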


\begin{proof}
We know that there exists a finite, tamely ramified extension $L/K$ such that $A_L$ reaches
semistable reduction. The extension $L/K$ induces an exact sequence
\[
0\longto\gal(K_s/L)\longto\gal(K_s/K)\Longto f\bbZ\mod n\longto 0,
\]
where $n\defeq[L:K]$ and where $\bbZ/n$ is isomorphic to the Galois group of $L/K$. Now, let $q\geq
2g+3$ be a prime. As in the proof of the theorem, we can choose a prime $\ell$ such that there is no
non-trivial action of a pro-$q$-group on $T_\ell(A_K)$ -- and on all of its subgroups. Let
$(\bbZ\mod n)_q$ be the $q$-part of $\bbZ/n$. Since $\bbZ/n$ is abelian, this corresponds to Galois
extensions $K\sub L'\sub L$. Then the preimage $f\inv((\bbZ\mod n)_q)=\gal(K_s/L)\cdot Q$ for some
pro-$q$-group $Q$ is the Galois group $\gal(K_s/L')$. Now, we can conclude as in the proof of the
theorem.
\end{proof}

Our main application of the above reduction theorem is the following result regarding
Grothendieck's pairing:

\begin{cor}\index{Grothendieck's pairing}
Let $R$ be a discrete valuation ring with perfect residue class field of characteristic $p$. If
$A_K$ is an abelian variety over $K\defeq\quot R$ of dimension $g$ with $2g+3\leq p$, then
Grothendieck's pairing for $A_K$ is perfect.
\end{cor}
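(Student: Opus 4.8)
The plan is to reduce to the strictly henselian situation treated above and then combine Theorem~\ref{smalldim_ssred} with Proposition~\ref{gp_perf_tamely}. First I would pass to the strict henselization $R^{\sh}$ of $R$. Since $k$ is perfect, its separable closure $k_s$ coincides with its algebraic closure and is therefore algebraically closed; hence $R^{\sh}$ is a strictly henselian discrete valuation ring with algebraically closed residue field of characteristic $p$, and $K^{\sh}\defeq\quot(R^{\sh})$ is an ind-\'etale extension of $K$. Formation of N\'eron models commutes with the base change $R\to R^{\sh}$ (\cite{blr}), so $\oldphi_{A_{R^{\sh}}}$ and $\oldphi_{A'_{R^{\sh}}}$ are obtained from $\oldphi_{A_R}$ and $\oldphi_{A'_R}$ by extension of scalars along $k\to k_s$, compatibly with the $\gal(k_s/k)$-action, and Grothendieck's pairing is compatible with this base change as well. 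A $\gal(k_s/k)$-equivariant pairing of finite abelian groups into $\bbQZ$ is perfect exactly when the underlying pairing of abelian groups is perfect, and the latter is nothing but Grothendieck's pairing for $A_{K^{\sh}}$; hence Grothendieck's pairing for $A_K$ is perfect if and only if it is perfect for $A_{K^{\sh}}$. So it suffices to prove the claim after replacing $R$ by $R^{\sh}$, and from now on I assume $R$ strictly henselian with algebraically closed residue field.

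Now the reduction theorem applies: since $2g+3\le p$, Theorem~\ref{smalldim_ssred} furnishes a tamely ramified extension $L/K$ such that $A_L\defeq A_K\tensor_K L$ has semistable reduction. By the structure of tamely ramified extensions of a strictly henselian discrete valuation ring, $L/K$ is automatically a cyclic Galois extension, so Proposition~\ref{gp_perf_tamely} applies to it. Let $S$ be the integral closure of $R$ in $L$ and let $A_S$, $A'_S$ be the N\'eron models of $A_L$, $A'_L$. Because $A_L$ has semistable reduction, Grothendieck's pairing for $A_S$ and $A'_S$ is perfect, by \cite{We} (equivalently, the second assertion of the first proposition of the introduction). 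This is assertion~\enum 1 of Proposition~\ref{gp_perf_tamely}, and the implication $\enum 1\then\enum 3$ of that proposition gives that Grothendieck's pairing for $A_R$ and $A'_R$ is perfect. Together with the first paragraph, this proves that Grothendieck's pairing for the original $A_K$ is perfect.

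The only point that is not entirely formal is the descent step of the first paragraph: one has to check that perfectness of Grothendieck's pairing is both preserved and reflected under passage to the strict henselization. All the ingredients are standard --- compatibility of N\'eron models and of the biextension defining $\gp$ with \'etale base change, together with the elementary observation that a Galois-equivariant pairing of finite groups is perfect if and only if it is perfect after forgetting the Galois action --- but this is the step that should be set up with care. Once it is in place, the corollary follows formally from Theorem~\ref{smalldim_ssred}, Proposition~\ref{gp_perf_tamely}, and the perfectness of Grothendieck's pairing in the semistable case.
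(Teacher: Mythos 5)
Your proposal is correct and follows essentially the same route as the paper: reduce to the strictly henselian case (the paper simply cites \cite{sga7}, IX, 1.3.1 for this step, where you spell out the descent argument), then combine Theorem~\ref{smalldim_ssred}, the perfectness in the semistable case from \cite{We}, and the implication \enum 1 $\then$ \enum 3 of Proposition~\ref{gp_perf_tamely}.
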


\begin{proof}
Without loss of generality we can assume $R$ to be strictly henselian (\cite{sga7}, IX, 1.3.1).
Since $k$ is assumed to be perfect, it is algebraically closed. According to the above theorem,
$A_K$ acquires semistable reduction after a tamely ramified Galois extension of $L/K$. As
Gro\-then\-dieck's pairing is perfect for $A_L$, cf.\ \cite{We}, we can conclude by means of
proposition \ref{gp_perf_tamely} that Gro\-then\-dieck's pairing of $A_K$ is perfect.
\end{proof}

Following Poincar\'e's reducibility theorem (\cite{mum}, IV, 18, theorem 1)\index{Poincar\'e
reducibility theorem}, every abeli\-an variety is isogenous to a product of simple abelian
varieties and we can prove:

\begin{cor}
Let $R$ be as in the above corollary and let $A_K$ be isogenous to the product
$B_{K,1}\times\ldots\times B_{K,n}$ with $2\cdot\dim B_{K,i}+3\leq p$ for every $i\in\set{1,\ldots
n}$, then Grothendieck's pairing of $A_K$ is perfect.
\end{cor}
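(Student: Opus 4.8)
The plan is to reduce the statement for $A_K$ to the previous corollary applied to each simple factor $B_{K,i}$, using that Grothendieck's pairing is compatible with isogenies. First I would invoke Poincar\'e's reducibility theorem to fix an isogeny $A_K\sim B_{K,1}\times\dots\times B_{K,n}$, and then use the fact that an isogeny of abelian varieties induces, on the level of N\'eron models, a morphism inducing a morphism of component groups with kernel and cokernel killed by the degree $d$ of the isogeny; the dual isogeny goes the other way, and the two compositions are multiplication by $d$. Consequently the induced maps on $\sext^1(-,\bbG_m)$ and on $\ker\gp$ (in the sense of Proposition \ref{gp_diag}) fit into a pair of morphisms $\ker\gp_{A_R}\to\ker\gp_{B_R}\to\ker\gp_{A_R}$ (writing $B\defeq B_1\times\dots\times B_n$) whose composition is multiplication by $d$, and symmetrically in the other direction.

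The key point is that $\ker\gp_{A_R}$ is a $p$-group (Proposition \ref{gp_diag}), so if $p\nmid d$ the multiplication-by-$d$ maps are isomorphisms and therefore $\ker\gp_{A_R}=0$ if and only if $\ker\gp_{B_R}=0$; the same argument applies with the roles of $\oldphi$ and $\oldphi'$ swapped, so $\gp_{A_K}$ is perfect if and only if $\gp_{B_K}$ is. Since Grothendieck's pairing is plainly multiplicative under products of abelian varieties — $\oldphi_{B_R}$ is the product of the $\oldphi_{B_{i,R}}$ and the pairing decomposes as the orthogonal sum — perfectness of $\gp_B$ follows from perfectness of each $\gp_{B_{K,i}}$, which is exactly the preceding corollary under the hypothesis $2\dim B_{K,i}+3\leq p$. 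Combining these equivalences gives perfectness of $\gp_{A_K}$.

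The one subtlety — and the main obstacle — is the hypothesis $p\nmid d$ for the chosen isogeny: Poincar\'e's theorem only guarantees \emph{some} isogeny, whose degree need not be prime to $p$. The standard fix is to choose the isogeny carefully: the composite $A_K\to\prod B_{K,i}\to A_K$ arising from Poincar\'e reducibility can be taken to have degree a perfect square, but more importantly one can replace it by a prime-to-$p$ isogeny after removing the $p$-primary part, or simply argue with $\ell$-divisible groups for $\ell\neq p$. Alternatively, one notes that the argument of Proposition \ref{gp_perf_tamely} only ever needs the prime-to-$p$ part of the relevant maps, since $\ker\gp$ is a $p$-group; so even an isogeny whose degree is divisible by $p$ still induces an isomorphism on the $p$-groups $\ker\gp$ provided one is slightly more careful, because the $p$-part of the isogeny acts through an endomorphism that is an isomorphism on the prime-to-$p$ torsion and the component-group contribution of $\gp$ lives in the $p$-part. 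I would state the prime-to-$p$ reduction cleanly as a lemma (isogenous abelian varieties have simultaneously perfect or imperfect Grothendieck pairings, since $\ker\gp$ is a $p$-group and any isogeny, composed with a suitable multiple of its dual, realizes multiplication by a $p$-power which is an isomorphism on the $p$-group $\ker\gp$ as soon as that $p$-power is prime to... ), and then the corollary is immediate. If that subtlety genuinely requires the isogeny to be prime to $p$, I would instead appeal to the fact that every abelian variety is isogenous, by a prime-to-$p$ isogeny, to a product of simple ones up to a $p$-isogeny, and handle the $p$-isogeny directly using that it induces an isomorphism on $\ell$-adic Tate modules for $\ell\neq p$, which is all the proof of the reduction theorem ever uses.
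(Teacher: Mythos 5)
There is a genuine gap: your main route rests on a lemma you never actually establish, namely that perfectness of Grothendieck's pairing is invariant under an arbitrary $K$-isogeny. The only mechanism you offer is to compose the isogeny $A_K\to\prod B_{K,i}$ with a complementary isogeny so that both composites are multiplication by the degree $d$, and to let these act on the $p$-group $\ker\gp$; this works only when $p\nmid d$, and Poincar\'e reducibility gives no control whatsoever on $d$. Your proposed repairs do not close the hole: splitting the kernel of the isogeny into its $p$-primary and prime-to-$p$ parts only shows that $A_K$ is prime-to-$p$-isogenous to some abelian variety which is in turn $p$-power-isogenous to $\prod B_{K,i}$ (and which need not itself be a product of simple factors), so a $p$-power isogeny remains; and multiplication by a power of $p$ on the $p$-group $\ker\gp$ is not injective, so the kernel comparison yields nothing --- your own sentence trails off at exactly this point. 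Since the prime-to-$p$ part of the pairing is perfect in any case (\cite{Be01}), an isogeny of degree divisible by $p$ gives no information precisely where it is needed.

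The paper avoids any comparison of the pairings of $A_K$ and of the product over $K$. It transfers only semistable reduction, which genuinely is an isogeny invariant (\cite{blr}, 7.3, corollary 7): by theorem \ref{smalldim_ssred} each $B_{K,i}$ becomes semistable over a tamely ramified extension $K_i$, the compositum $K'$ of the $K_i$ is again tamely ramified, the product and hence $A_{K'}$ are semistable over $K'$, Werner's theorem gives perfectness of the pairing over $K'$, and proposition \ref{gp_perf_tamely} carries perfectness down to $A_K$. Your Tate-module fallback is the germ of this argument, but as written it aims at the wrong target: $T_\ell$ controls the reduction type, not the component groups or the pairing, so to use it you must switch from ``perfectness is isogeny-invariant over $K$'' to ``$A_K$ acquires semistable reduction over a tamely ramified extension,'' and that forces the two steps missing from your sketch --- forming the compositum of the several tame extensions furnished by the theorem for the individual factors (you cannot apply theorem \ref{smalldim_ssred} to the product itself, since its total dimension may violate $2g+3\leq p$), and invoking proposition \ref{gp_perf_tamely} to return from the tame extension to $K$. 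Your use of the previous corollary for each factor plus multiplicativity of the pairing under products is fine as far as it goes, but it leaves you over $K$ facing exactly the isogeny-invariance statement that is not available.
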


\begin{proof}
Each factor $B_{K,i}$ reaches semistable reduction after a tamely ramified extension $K_i$ of $K$.
If we take $K'$ to be the composite field of all $K_i$, then $K'/K$ is tamely ramified and the
product of the $B_{K,i}$ reaches semistable reduction over $K'$. Since $A_K$ and $\prod B_{K,i}$
are isogenous, $A_{K'}$ has semistable reduction if and only if each $B_{K',i}$ has semistable
reduction by \cite{blr}, 7.3, corollary 7. Now, proposition \ref{gp_perf_tamely} completes the
proof.
\end{proof}



\def\MakeUppercase#1{#1}



\end{document}